\def\subclassname{{\bfseries Mathematics Subject Classification
(2010)}\enspace}
\def\subclass#1{\par\addvspace\medskipamount{\rightskip=0pt plus1cm
\def\and{\ifhmode\unskip\nobreak\fi\ $\cdot$
}\noindent\subclassname\ignorespaces#1\par}}
   \newcommand{\dx}{\mathrm{d}}
   \DeclareMathOperator{\N}{\mathbb{N}}
   \DeclareMathOperator{\R}{\mathbb{R}}
   \DeclareMathOperator{\C}{\mathbb{C}}
   \DeclareMathOperator{\supp}{supp}
   \DeclareMathOperator{\om}{\mathcal{O}_M}
   \DeclareMathOperator{\oc}{\mathcal{O}_C}
   \DeclareMathOperator{\bp}{\dot{\mathcal{B}}}
\newtheorem{proposition}{Proposition}
\newtheorem{lemma}{Lemma}
\theoremstyle{remark}
\newtheorem{remark}{Remark}
\numberwithin{theorem}{section}
\begin{document}
\keywords{multipliers $\cdot$ convolutors $\cdot$ continuity of multiplications and convolutions $\cdot$ topological algebras}
\subjclass[2000]{Primary 46F10 \and 46F05 ; Secondary 46E10 }
\title[Multiplications and Convolutions in distribution spaces]{Multiplications and Convolutions in {L}. {S}chwartz' Spaces of Test Functions and Distributions and their Continuity}
\author{Julian Larcher}
\address{Department of Mathematics, University of Innsbruck, Technikerstr. 13, A-6020 Innsbruck, Austria}
\email{julian.larcher@uibk.ac.at}

\begin{abstract}
We list multiplier and convolutor spaces of the spaces occurring in L. Schwartz' ``Th\'{e}orie des distributions''. Furthermore we clarify whether the multiplications and convolutions are continuous or not.
\end{abstract} 

\maketitle

\section{Introduction and Notation}

We aim at giving an overview of the multipliers and convolutors of L. Schwartz' spaces of test functions and distributions in his treatise on distribution theory \cite{SCHW}. The continuity of the bilinear multiplication and convolution mappings in \cite{SCHW} (and every other book on distribution theory known to the author) is treated only sketchy and most of the considerations are limited to hypocontinuity, i.e. continuity if one of the components is restricted to a bounded subset. We completely describe the continuity properties of these mappings and give proofs mainly by working with seminorms. Besides, we point out some mistakes in the literature, in particular in L. Ehrenpreis' work \cite{EHR56} (Remark 5) and in Remark 3, since the continuity of separately continuous bilinear mappings often seems to be a source of error.
\par
For a part of the mappings under consideration, it was observed early that they are discontinuous and hence the very useful term of hypoconituity was brought in; this was done in the famous early paper of Dieudonn\'{e} and Schwartz \cite{DS49} and it can be seen quite easily that all of our multiplications and convolutions are hypocontinuous (cf. Section 3.2, p.~6). 
\par
But it is often also essential to know if a bilinear mapping is continuous or only hypocontinuous. As an example, we explain this point by L. Schwartz' proposition on the ``elementary'' convolution of vector-valued distributions \cite[Prop.~34, p.~151]{SCHW58}: 
Let $\mathcal{H}$, $\mathcal{K}$, $\mathcal{L}$ be three distribution spaces with certain properties, and $E$ and $F$ be separated locally convex spaces. If
\[\ast\colon: \mathcal{H}\times \mathcal{K}\rightarrow \mathcal{L}, (S,T)\mapsto S\ast T,\]
is hypocontinuous convolution then there exists a separately continuous convolution
\[\begin{smallmatrix}\ast\\\pi\end{smallmatrix}\colon \mathcal{H}(E)\times \mathcal{K}(F)\rightarrow \mathcal{L}(E\hat{\otimes}_\pi F),\]
fulfilling the consistency condition \[(S\otimes e)\begin{smallmatrix}\ast\\\pi\end{smallmatrix}(T\otimes f)=(S\ast T)\otimes (e\otimes f)\] for all $(S,T)\in\mathcal{H}\times\mathcal{K}$ and $(e,f)\in E\times F.$
\par
From this proposition, a vector-valued convolution emerges if a further bilinear mapping $b\colon E\times F\rightarrow G$ ($G$ a complete locally convex space) is given which can be continued to the completed $\pi$-tensor product as $\tilde{b}\colon E\hat{\otimes}_\pi F \rightarrow G$ and then yields the mapping $\begin{smallmatrix}\ast\\ b\end{smallmatrix} \colon \mathcal{H}(E)\times \mathcal{K}(E)\rightarrow \mathcal{L}(G)$ as the composition $\begin{smallmatrix}\ast\\ b\end{smallmatrix} =(\text{id}_{\mathcal{L}} \epsilon \tilde{b})\circ \begin{smallmatrix}\ast\\\pi\end{smallmatrix}$.
The key point is the possibility of continuation of $b$ from $E\times F$ to $E\hat{\otimes}_\pi F$ which is only possible for continuous $b$ but not for hypocontinuous $b$. As for instance the proposition does not furnish the existence of a vector-valued convolution
\[\begin{smallmatrix}\ast\\\ast\end{smallmatrix}\colon \mathcal{E}'(\mathcal{D'})\times\mathcal{D}'(\mathcal{E'})\rightarrow \mathcal{D}'(\mathcal{D'}).\] However it implies the existence (and the uniqueness) of the vector-valued convolution
\[\begin{smallmatrix}\ast\\\ast\end{smallmatrix}\colon \mathcal{E}'(\mathcal{E'})\times\mathcal{D}'(\mathcal{E'})\rightarrow \mathcal{D}'(\mathcal{D'}).\]
\\\par
If $E$ is a space of $\mathcal{C}^\infty$--functions on $\R^n$, we call $M(E)$ the space of multipliers of $E$, i.e. the largest space of $\mathcal{C}^\infty$--functions, such that the multiplication on $E\times M(E)\stackrel{\cdot}{\rightarrow}\mathcal{C}^\infty$ is well-defined and takes values in $E$. If $E$ is a distribution space, the space $M(E)$ is defined analogously. But in this case the multiplication with $\mathcal{C}^\infty$--functions is defined by transposition, such as the multiplication $\mathcal{D}'\times\mathcal{C^\infty}\stackrel{\cdot}{\rightarrow}\mathcal{D}'$ in \cite[Chap. 5, §1]{SCHW}, i.e. for $(f,T)\in\mathcal{C}^\infty\times\mathcal{D}'$ the distribution $fT$ is defined as $\langle \varphi,fT\rangle=\langle f\varphi,T\rangle$ for $\varphi\in\mathcal{D}$, where here and in the rest of the paper $\langle-,-\rangle$ denotes the evaluation mapping. For a function or distribution space $E$, we call $C(E)$ the space of convolutors of $E$, i.e. the largest space, such that the convolution $E\times C(E)\stackrel{\ast}{\rightarrow}\mathcal{D}'$ is well-defined and takes values in $E$. For convolvability of distributions, we refer to one of the (equivalent) conditions in \cite[p.~315]{ORT10}. In Section 2 we will discuss multipliers and convolutors for the function spaces
\par
$\mathcal{D}=\{\varphi\in\mathcal{C}^\infty(\R^n)\,;\,\supp\varphi\text{ compact}\}$ (test functions)
\par
$\mathcal{S}=\{\psi\in\mathcal{C}^\infty(\R^n)\,;\,\forall\alpha,\beta\in\N_0^n\colon x^\alpha\partial^\beta\psi(x)\in\mathcal{C}_0\}$ (rapidly decreasing functions)
\par
$\mathcal{D}_{L^p}=\{f\in\mathcal{C}^\infty(\R^n)\,;\,\forall\alpha\in\N_0^n\colon\partial^\alpha f\in L^p\}$ (Sobolev space $W^{\infty,p}$), $1\leq p <\infty$,
\par
$\bp\,\,=\{f\in\mathcal{C}^\infty(\R^n)\,;\,\forall\alpha\in\N_0^n\colon\partial^\alpha f\in \mathcal{C}_0\}$

\par
$\oc=\{f\in\mathcal{C}^\infty(\R^n)\,;\,\exists k\in\N_0\,\forall\alpha\in\N_0^n\colon (1+|x|^2)^{-k}\partial^\alpha f(x)\in\mathcal{C}_0\}$ (very slowly increasing functions)
\par
$\om=\{f\in\mathcal{C}^\infty(\R^n)\,;\,\forall\alpha\in\N_0^n\,\exists k\in\N_0 \colon(1+|x|^2)^{-k}\partial^\alpha f(x)\in\mathcal{C}_0\}$ (slowly increasing functions)
\par
$\mathcal{E}=\mathcal{C}^\infty(\R^n)$
\\
and of their duals, the distribution spaces
\par
$\mathcal{D}'$ (distributions)
\par
$\mathcal{S'}$ (tempered distributions)
\par
$\mathcal{D}'_{L^q}=\{T\in\mathcal{D}'\,;\,\exists m\in\N_0\colon T=\sum_{|\alpha|\leq m} \partial^\alpha f_\alpha\text{ with }f_\alpha\in L^q\}$, $1< q \leq \infty$,
\par
$\mathcal{D}'_{L^1}=\{T\in\mathcal{D}'\,;\,\exists m\in\N_0\colon T=\sum_{|\alpha|\leq m} \partial^\alpha f_\alpha\text{ with }f_\alpha\in L^1\}$
\par
$\oc'=\{T\in\mathcal{D}'\,;\,\forall k\in\N_0 \colon(1+|x|^2)^{k}T\in\mathcal{D}'_{L^\infty}\}$ (rapidly decreasing distributions)
\par
$\om'=\{T\in\mathcal{D}'\,;\,\exists m\,\forall k\colon(1+|x|^2)^{k}T=\sum_{|\alpha|\leq m} \partial^\alpha f_\alpha \text{ with } f_\alpha\in L^\infty\}$ (very rapidly decreasing distributions)
\par
$\mathcal{E'}$ (distributions with compact support),
\\ where $\mathcal{C}^\infty(\R^n)$ is the space of infinitely differentiable functions on $\R^n$ (with complex values), $\mathcal{C}_0$ denotes the space of continuous functions on $\R^n$ tending to 0 at infinity and $L^p$ are the usual Lebesgue spaces. We have the continuous inclusions (cf. \cite[p.~419]{SCHW})
\begin{large}
\begin{center}
\begin{tabular}{c c c c c c c c c c c c c}
$\mathcal{D}$ & $\subset$ &$\mathcal{S}$ & $\subset$ &$\mathcal{D}_{L^p}$& $\subset$ &$\bp$ & $\subset$ &$\oc$ & $\subset$ &$\om$ & $\subset$ &$\mathcal{E}$ \\
$\cap$ &&&&&&&&&&&& $\cap$ \\
$\mathcal{E'}$& $\subset$&$\om'$& $\subset$&$\oc'$& $\subset$&$\mathcal{D}'_{L^1}$&$\subset$& $\mathcal{D}'_{L^q}$& $\subset$&$\mathcal{S'}$& $\subset$&$\mathcal{D}'$ \\
\end{tabular}
\end{center}
\end{large}

In Section 3.1, we will give an overview of the seminorms defining the topologies of the different function and distribution spaces in order to investigate the continuity of the existing multiplications and convolutions in Section 3.2. For the different terms of continuity of bilinear mappings, we refer to \cite[pp.~III.28--III.32]{BOU}. 

\section{Multipliers and convolutors of function and distribution spaces}

From the following table, one can read off the multipliers and convolutors of the spaces listed above. The symbols 'o' indicate continuous and the symbols 'x' indicate discontinuous multiplication (or convolution) mappings $E\times M(E)\stackrel{\cdot}{\rightarrow}E$ (or $E\times C(E)\stackrel{\ast}{\rightarrow}E$), respectively. The number following thereafter is the number of the proposition, where the continuity or discontinuity is proved.

\begin{remark}
In some cases the space $C(E)$ and the c-dual $E^*$ of $E$, i.e. the largest space, such that the convolution on $E\times E^*\rightarrow\mathcal{D}'$ is well-defined, coincide. Hence for some parts of the table see also Theorem 5 in \cite[p.~22]{YO58}, where the c-duals of the function and distribution spaces of L. Schwartz are determined.
\end{remark}\pagebreak
\begin{large}
\begin{center}
multiplier-convolutor-table \\\vspace{0.3cm}
\begin{tabular}{l |c| c}
     $E$      & $M(E)\hfill$ \begin{scriptsize}\end{scriptsize}        &    $C(E)\hfill$ \begin{scriptsize}\end{scriptsize}       \\\hline \hline
$\mathcal{D}$ &       $\mathcal{E}\hfill$ \begin{scriptsize}x  $_1$\end{scriptsize}      &      $\mathcal{E'}\hfill$ \begin{scriptsize}x  $_2$\end{scriptsize} \\\hline
$\mathcal{S} $  &       $\om\hfill$ \begin{scriptsize}x  $_2$\end{scriptsize}     &      $\oc'\hfill$ \begin{scriptsize}x  $_2$\end{scriptsize}  \\ \hline
$\mathcal{D}_{L^p}  $ &       $\mathcal{D}_{L^\infty}\hfill$ \begin{scriptsize}o  $_3$\end{scriptsize}  & $\mathcal{D}'_{L^1}\hfill$ \begin{scriptsize}x  $_2$\end{scriptsize}   \\\hline

 $\bp$&       $\mathcal{D}_{L^\infty}\hfill$ \begin{scriptsize}o  $_3$\end{scriptsize}  & $\mathcal{D}'_{L^1}\hfill$ \begin{scriptsize}x  $_2$\end{scriptsize} \\\hline
$\oc$&$\oc  \hfill$ \begin{scriptsize}x  $_4$\end{scriptsize}  &   $\oc'  \hfill$ \begin{scriptsize}x  $_2$\end{scriptsize}    \\\hline
$\om$  &        $\om\hfill$ \begin{scriptsize}o  $_5$\end{scriptsize}       &      $\om'\hfill$ \begin{scriptsize}x  $_{2}$\end{scriptsize} \\\hline
$\mathcal{E}$   &       $\mathcal{E}\hfill$ \begin{scriptsize}o  $_3$\end{scriptsize}   &    $\mathcal{E'}\hfill$ \begin{scriptsize}x  $_2$\end{scriptsize} \\\hline\hline

$\mathcal{E'} $ &       $\mathcal{E} \hfill$ \begin{scriptsize}x  $_{6}$\end{scriptsize} &   $\mathcal{E'} \hfill$ \begin{scriptsize}o  $_3$\end{scriptsize}   \\\hline
$\om'$  &     $\om\hfill$  \begin{scriptsize}x  $_{6}$\end{scriptsize}    &     $\om'\hfill$ \begin{scriptsize}x  $_{4}$\end{scriptsize}\\\hline
$\oc' $  &     $\oc\hfill$  \begin{scriptsize}x  $_{6}$\end{scriptsize}     &     $\oc'\hfill$ \begin{scriptsize}o  $_{5}$\end{scriptsize}\\\hline
$\mathcal{D}'_{L^1}  $ &       $\mathcal{D}_{L^\infty}\hfill$ \begin{scriptsize}x  $_{6}$\end{scriptsize} & $\mathcal{D}'_{L^1}\hfill$ \begin{scriptsize}o  $_{3}$ \end{scriptsize}\\\hline
$\mathcal{D}'_{L^q}  $ &       $\mathcal{D}_{L^\infty}\hfill$ \begin{scriptsize}x  $_{6}$\end{scriptsize} & $\mathcal{D}'_{L^1}\hfill$ \begin{scriptsize}o  $_{3}$ \end{scriptsize}\\\hline
$\mathcal{S}' $  &$\om\hfill$ \begin{scriptsize}x  $_{6}$\end{scriptsize}&  $\oc' \hfill$ \begin{scriptsize}x  $_{6}$\end{scriptsize}  \\\hline
$\mathcal{D}'$&$\mathcal{E}\hfill$ \begin{scriptsize}x  $_{6}$\end{scriptsize}&$\mathcal{E'}\hfill$ \begin{scriptsize}x  $_{7}$ \end{scriptsize}\\
\end{tabular}
\end{center}
\end{large}
\vspace{10pt}
The multipliers and convolutors of the spaces $\mathcal{D}$, $\mathcal{S}$, $\mathcal{E}$ and of their duals are widely known and largely contained in \cite{SCHW}.  Hölder's inequality tells us that the multiplication on ${L^p}\times{L^{s}}$ takes values in ${L^r}$ if $\tfrac1p+\tfrac{1}{s}=\tfrac1r$. Young's inequality asserts that the convolution on ${L^p}\times{L^{s}}$ takes values in ${L^r}$ if $\tfrac1p+\tfrac1s=1+\tfrac1r$. In both cases we want $p=r$, which leads us to the multipliers and convolutors of the spaces $\mathcal{D}_{L^p} $ and $\mathcal{D}'_{L^q}$, cf. \cite[pp.~203f]{SCHW}. That the space of multipliers of $\bp$ is $\mathcal{D}_{L^\infty}$ is obvious and that the convolution on $\bp\times \mathcal{D}'_{L^1}$ takes values in $\bp$ is due to the well-definedness of the convolution $\mathcal{C}_0\times L^1\stackrel{\ast}{\rightarrow}\mathcal{C}_0$ (dominated convergence). Note that if $g\in\mathcal{D}_{L^p}$, $1\leq p <\infty$, (or $g\in\,\bp$) and $T\in\mathcal{D}'_{L^q}=(\mathcal{D}_{L^p})'$, $1/p+1/q=1$, (or $T\in\mathcal{D}'_{L^1}=(\bp)'$) is of the form $T=\sum_{|\alpha|\leq m}  \partial^\alpha f_\alpha$, $f_\alpha\in L^q$, the evaluation $\langle  g,T \rangle$ is defined as
$$\langle  g,T \rangle=\sum_{|\alpha|\leq m} (-1)^{|\alpha|}\langle \partial^\alpha g, f_\alpha\rangle=\sum_{|\alpha|\leq m} (-1)^{|\alpha|}\int_{\R^n}\partial^\alpha g(x) f_\alpha(x)~\dx x,$$
according to the definition of differentiation of distributions. 
\par The strong dual of $\mathcal{D}_{L^\infty}$, denoted $\mathcal{B}'$ by L. Schwartz, is not a space of distributions \cite[p.~200]{SCHW}, so we excluded this space from our considerations. To obtain symmetry regarding duality in the table, we also left out the space $\mathcal{D}_{L^\infty}$ itself, which would be located between $\bp$ and $\oc$. The multipliers and convolutors of $\mathcal{D}_{L^\infty}$ are the same as that of the spaces $\mathcal{D}_{L^p}$, $1\leq p<\infty$, and $\bp$.
\par 
We will now proof $M(\oc)=\oc$ and $M(\om)=\om$. Function spaces $E$, which contain the constant functions, fulfill 
$$M(E)=\{1\}\cdot M(E)\subset E\cdot M(E)= E.$$
 Thus, since $1\in\oc$ and $1\in\om$, $M(\om)$ has to be a subspace of $\om$ and $M(\oc)$ a subspace of $\oc$. Note that neither $\mathcal{D}$, $\mathcal{S}$, $\mathcal{D}_{L^p}$ (for $1\leq p<\infty$) nor $\bp$ contain the function $x\mapsto 1$ and that they are not supersets of their multipliers.
 
 \begin{remark} The statement for the space $\om$ of the following lemma appears also in \cite[p.~88]{EKO10}, Prop. 6.10, and \cite[p.~14]{GV92}, Prop. 2 (i). Note that part (ii) of Proposition~2 in \cite{GV92} contains a misleading sentence, since it states that the operation of multiplication $\om\times \mathcal{S}\stackrel{\cdot}{\rightarrow} \mathcal{S}$ is well-defined (which is correct) and that ``this operator is continuous''. In Section 3.2 we will see that this multiplication mapping is separately continuous (and even hypocontinuous) but not jointly continuous. Therefore, what we have is that every $f\in\om$ defines a continuous multiplication operator $\nolinebreak{\mathcal{S}\rightarrow \mathcal{S}, \psi\mapsto f\psi}$. \end{remark}

\begin{lemma}
$(\oc,\cdot)$ and $(\om,\cdot)$ are multiplication algebras.
\end{lemma}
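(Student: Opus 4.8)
The inclusions $M(\oc)\subseteq\oc$ and $M(\om)\subseteq\om$ have already been obtained in the paragraph preceding the lemma, so the plan is to establish the reverse inclusions: if $f,g\in\oc$ then $fg\in\oc$, and if $f,g\in\om$ then $fg\in\om$. Since pointwise products of $\mathcal{C}^\infty$-functions are $\mathcal{C}^\infty$, the only thing to verify is the growth condition on the derivatives of $fg$, and for this the single tool is the Leibniz formula $\partial^\alpha(fg)=\sum_{\beta\le\alpha}\binom{\alpha}{\beta}\,\partial^\beta f\,\partial^{\alpha-\beta}g$. Two elementary remarks will be used throughout: the product of a function in $\mathcal{C}_0$ with a bounded continuous function again lies in $\mathcal{C}_0$ (in particular $\mathcal{C}_0$ is closed under multiplication, as its elements are bounded), and $x\mapsto(1+|x|^2)^{-j}$ is bounded by $1$ for every $j\in\N_0$.

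For $\oc$ I would argue as follows. By definition of $\oc$ there are $k_f,k_g\in\N_0$ — \emph{uniform} in the order of the derivative — with $(1+|x|^2)^{-k_f}\partial^\beta f\in\mathcal{C}_0$ and $(1+|x|^2)^{-k_g}\partial^\gamma g\in\mathcal{C}_0$ for all $\beta,\gamma$. Put $k:=k_f+k_g$. Inserting the weight $(1+|x|^2)^{-k}=(1+|x|^2)^{-k_f}(1+|x|^2)^{-k_g}$ into each summand of the Leibniz expansion turns every term into a product of two $\mathcal{C}_0$-functions, hence into a $\mathcal{C}_0$-function; the finite sum is therefore in $\mathcal{C}_0$, so $(1+|x|^2)^{-k}\partial^\alpha(fg)\in\mathcal{C}_0$ for every $\alpha$, i.e.\ $fg\in\oc$.

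For $\om$ the exponent is allowed to depend on the order of the derivative, so I would fix $\alpha$ and choose, for each $\beta\le\alpha$, numbers $k(\beta),l(\alpha-\beta)\in\N_0$ with $(1+|x|^2)^{-k(\beta)}\partial^\beta f\in\mathcal{C}_0$ and $(1+|x|^2)^{-l(\alpha-\beta)}\partial^{\alpha-\beta}g\in\mathcal{C}_0$. Set $m:=\max_{\beta\le\alpha}\bigl(k(\beta)+l(\alpha-\beta)\bigr)$, which is finite since only finitely many $\beta\le\alpha$ occur. For each such $\beta$ one writes $(1+|x|^2)^{-m}\partial^\beta f\,\partial^{\alpha-\beta}g$ as the bounded factor $(1+|x|^2)^{-(m-k(\beta)-l(\alpha-\beta))}$ times $\bigl((1+|x|^2)^{-k(\beta)}\partial^\beta f\bigr)\bigl((1+|x|^2)^{-l(\alpha-\beta)}\partial^{\alpha-\beta}g\bigr)\in\mathcal{C}_0$, so each term, and hence the finite sum, lies in $\mathcal{C}_0$; thus $(1+|x|^2)^{-m}\partial^\alpha(fg)\in\mathcal{C}_0$, and as $\alpha$ was arbitrary, $fg\in\om$.

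I do not expect a real obstacle here: the computation is routine once the Leibniz rule is in place. The one point deserving a moment of care is precisely the order of quantifiers in the definition of $\om$ — there is in general no uniform exponent as in the $\oc$-case, so one must take the maximum of $k(\beta)+l(\alpha-\beta)$ over the finitely many $\beta\le\alpha$ and absorb the surplus powers of $(1+|x|^2)^{-1}$ into a bounded factor, which is exactly what the second elementary remark above is for.
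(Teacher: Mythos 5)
Your proposal is correct and follows essentially the same route as the paper: add the exponents in the $\oc$ case, and in the $\om$ case exploit that only finitely many $\beta\le\alpha$ occur so that a single exponent works after absorbing surplus powers of $(1+|x|^2)^{-1}$ into a bounded factor. You merely spell out the Leibniz expansion and the quantifier bookkeeping more explicitly than the paper does (which, beyond this, only adds the one-line remark that $\oc$ and $\om$ are vector spaces).
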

\begin{proof}
$\oc$ and $\om$ are clearly vector spaces. Let now $f,g\in\oc$. There are $k_1,k_2\in\N_0$ such that for all $\alpha\in\N_0^n$ the functions $(1+|x|^2)^{-k_1}\partial^\alpha f(x)$ and $(1+|x|^2)^{-k_2}\partial^\alpha g(x)$ are in $\mathcal{C}_0$. Thus, if we set $k=k_1+k_2$ we get $(1+|x|^2)^{-k}\partial^\alpha (f\cdot g)\in\mathcal{C}_0$ for all $\alpha\in\N_0^n$. 
\par 
Let $f,g\in\om$ and $\alpha\in\N_0^n$. We choose $k_1$ and $k_2$ with  $(1+|x|^2)^{-k_1}\partial^\alpha f(x)\in\mathcal{C}_0$ and $(1+|x|^2)^{-k_2}\partial^\alpha g(x)\in\mathcal{C}_0$, $\beta\leq\alpha$. Setting again $k=k_1+k_2$ we get $(1+|x|^2)^{-k}\partial^\beta (f\cdot g)\in\mathcal{C}_0$, $\beta\leq\alpha$. 
\end{proof}
Since the multiplication of distributions with infinitely differentiable functions is defined by transposition, the multipliers of the distribution spaces $\om'$ and $\oc'$ are the same as for their preduals $\om$ and $\oc$, respectively.
Hence we also have $M(\oc')=\oc$ and $M(\om')=\om$. \par If we denote by $\mathcal{F}$ the Fourier transform, we have $\mathcal{F}(\oc)=\om'$ and $\mathcal{F}(\om)=\oc'$, see Th\'{e}or\`{e}me XV in \cite[p.~268]{SCHW}. The Convolution Theorem (or Exchange Theorem/Fourier Exchange Formula) states that the equation $\mathcal{F}(S \ast T)=\mathcal{F}S \cdot \mathcal{F}T$ holds for pairs $(S,T)$ in $\oc'\times\mathcal{S'}$, see Theorem 3 in \cite[p.~424]{HOR}. This equation also holds for pairs in $\oc'\times\oc$, $\om'\times\om$, $\om'\times\om'$ and $\oc'\times\oc'$, since these spaces are contained in $\oc'\times\mathcal{S'}$. Hence $M(\oc)=\oc$ and $M(\om)=\om$ yield $C(\om')=\om'$ and $C(\oc')=\oc'$, respectively. Furthermore, we have $C(\om)=\om'$ and $C(\oc)=\oc'$, since $M(\oc')=\oc$ and $M(\om')=\om$, respectively. \par
Note that the identity $C(\mathcal{S})=C(\oc)=C(\mathcal{S'})=C(\oc')=\oc'$ is also contained in \cite[p.~53]{GV92}.

\begin{lemma}$(\oc',\ast)$ and $(\om',\ast)$ are convolution algebras.\end{lemma}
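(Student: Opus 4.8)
The plan is to transfer the statement to Lemma~1 by means of the Fourier transform $\mathcal{F}$. The mere closure of $\oc'$ and $\om'$ under convolution is already contained in the identities $C(\oc')=\oc'$ and $C(\om')=\om'$ noted above; for completeness I would recall the argument, which at the same time prepares the proof of associativity. First I would invoke Th\'{e}or\`{e}me~XV in \cite[p.~268]{SCHW}, which gives $\mathcal{F}(\oc)=\om'$ and $\mathcal{F}(\om)=\oc'$; applying $\mathcal{F}$ once more and using that all four spaces are invariant under the reflection $x\mapsto -x$ (so that $\mathcal{F}^2$ is the identity on them), one gets $\mathcal{F}(\oc')=\om$ and $\mathcal{F}(\om')=\oc$, i.e. $\mathcal{F}^{-1}(\om)=\oc'$ and $\mathcal{F}^{-1}(\oc)=\om'$.

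Next I would take $S,T\in\oc'$. Since $\oc'\subset\mathcal{S}'$, the pair $(S,T)$ lies in $\oc'\times\mathcal{S}'$, so $S\ast T$ is well-defined and the Convolution Theorem gives $\mathcal{F}(S\ast T)=\mathcal{F}S\cdot\mathcal{F}T$. By the previous step $\mathcal{F}S,\mathcal{F}T\in\om$, and by Lemma~1 their pointwise product lies again in $\om$; applying $\mathcal{F}^{-1}$ then yields $S\ast T\in\mathcal{F}^{-1}(\om)=\oc'$. For $\om'$ one argues in exactly the same way with the roles of $\oc$ and $\om$ interchanged: the inclusions $\om'\subset\oc'\subset\mathcal{S}'$ keep the Convolution Theorem applicable, $\mathcal{F}$ maps $\om'$ onto $\oc$, and $(\oc,\cdot)$ is a multiplication algebra by Lemma~1, so $S\ast T\in\mathcal{F}^{-1}(\oc)=\om'$.

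It then remains to observe that $\ast$ is a bilinear, commutative and associative operation on each of these spaces. Bilinearity and commutativity are immediate, and associativity is obtained by transporting the associativity of pointwise multiplication (of the genuine $\mathcal{C}^\infty$-functions in $\oc$, resp.\ $\om$) through the injective map $\mathcal{F}$; this is legitimate precisely because the closure just established guarantees that every iterated convolution of elements of $\oc'$ (resp.\ $\om'$) lies again in $\oc'$ (resp.\ $\om'$), so that the Convolution Theorem may be applied at each stage. I do not anticipate a real obstacle here; the only point requiring a little care is the repeated appeal to the Convolution Theorem, which is covered throughout by the chain $\om'\subset\oc'\subset\mathcal{S}'$ together with the fact that convolution does not leave these spaces.
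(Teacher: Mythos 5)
Your proof is correct and is essentially the paper's own argument: the paper justifies this lemma in the paragraph preceding it, using Th\'eor\`eme XV ($\mathcal{F}(\oc)=\om'$, $\mathcal{F}(\om)=\oc'$), the Convolution Theorem on $\oc'\times\mathcal{S}'$ (hence on $\oc'\times\oc'$ and $\om'\times\om'$), and Lemma~1 ($M(\oc)=\oc$, $M(\om)=\om$) to get $C(\oc')=\oc'$ and $C(\om')=\om'$, exactly as you do. The only cosmetic slip is the parenthetical claim that $\mathcal{F}^2$ is the identity on these spaces---it is the reflection $f\mapsto\check f$---but since $\oc$, $\om$, $\oc'$, $\om'$ are reflection-invariant, your conclusions $\mathcal{F}(\oc')=\om$ and $\mathcal{F}(\om')=\oc$, and hence the whole argument, stand.
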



\section{Continuity of the multiplications and convolutions}

\subsection{Seminorms defining the topologies of the function and distribution spaces}

In order to survey the continuity of mappings between these function and distribution spaces, we need to know the seminorms defining their topology. A system of defining seminorms of $\mathcal{D}$ is formed by the seminorms
$$p_{\boldsymbol{m},\boldsymbol{\varepsilon}}(\varphi)=\sup_{\nu\in\N_0}\left(\sup_{|x|\geq\nu, |\alpha|\leq m_\nu}\frac{1}{\varepsilon_\nu}|\partial^\alpha\varphi(x)|\right),\,\varphi\in\mathcal{D},$$
 where $\boldsymbol{m}=(m_k)_{k\in\N_0}$ is a sequence of natural numbers tending to infinity and $\boldsymbol{\varepsilon}=(\varepsilon_k)_{k\in\N_0}$ is a sequence of real numbers in $(0,\infty)$ tending to 0 (see Chap. 3, §1, p.~65 in \cite{SCHW}). Note that every $p_{\boldsymbol{m},\boldsymbol{\varepsilon}}$ is even a norm, since it dominates $f\mapsto\Vert f\Vert_\infty=\sup_{x\in\R^n}|f(x)|$ (set $\nu=0$ and $\alpha=0$), and that $p_{\boldsymbol{m},\boldsymbol{\varepsilon}}(\varphi)=\sup_{|\alpha|\leq m_0}\frac{1}{\varepsilon_0}\Vert\partial^\alpha\varphi\Vert_\infty$ if $\supp\varphi\subset\{x\in\R^n\,;\, |x|\leq 1\}$. 
\par
The seminorms of $\mathcal{S}$ are 
$$p_{m,\beta}(\varphi)=\sup_{x\in\R^n,|\alpha|\leq m}|x^\beta \partial^\alpha \varphi(x)|,\,\varphi\in\mathcal{S},$$ where $m\in\N_0$ and $\beta\in\N_{0}^{n}$.
All these mappings are even norms, and if we set $m=0$ and $\beta=0$ we obtain the $\Vert\cdot\Vert_\infty$-norm. 
\par
For $m\in\N_0$
$$p_m(f)=\sup_{|\alpha|\leq m}\Vert \partial^\alpha f\Vert_p$$
are the defining seminorms (which are again even norms) of $\mathcal{D}_{L^p}$. For $p=\infty$ these are also the norms of $\bp$ since this space carries the subspace topology relative to $\mathcal{D}_{L^\infty}$. 
The set $\{ p_{m,\psi}\,;\,m\in\N_0,\psi\in\mathcal{S}\} $, where 
$$p_{m,\psi}(f)=\sup_{|\alpha|\leq m}\Vert\psi\cdot \partial^\alpha f\Vert_\infty$$
for $f\in\om$, is a system of norms defining the topology of $\om$.
\par
For a compact subset $K$ of $\R^n$ and $m\in\N_0$ the seminorms
$$p_{m,K}(f)=\sup_{x\in K,|\alpha|\leq m}|\partial^\alpha f(x)|$$
form a system, which defines the topology of $\mathcal{E}$. The space $\mathcal{E}$ is the only function space considered, without continuous norm, i.e. there is no neighbourhood of zero, that doesn't contain any straight line.
\par
The spaces $\mathcal{S}$, $\mathcal{D}_{L^p}$, $\bp$ and $\mathcal{E}$ are metrizable since their topology can be defined by a countable family of seminorms. According to \cite[p.~442]{HOR} they are also complete and, hence, Fr\'{e}chet spaces. Their duals are (DF)-spaces (see Def. 1 in \cite[p.~63]{GRO54} or \cite[p.~257]{JAR} for the definition of (DF)-spaces).
\par 
The continuous seminorms of the distribution spaces $\mathcal{D}', \mathcal{S'}, \mathcal{D}'_{L^q}, \oc'$, $\om'$ and $\mathcal{E'}$ are defined as the supremum of the evaluation on bounded subsets of their predual. Seminorms on $\mathcal{E'}$, for example, are defined as $p_B(T)=\sup_{g\in B}|\langle g,T \rangle|$, where $B$ is a bounded subset of $\mathcal{E}$.
 \par
Since $\mathcal{D}$ is the regular inductive limit of the spaces $\mathcal{D}_K=\{\varphi\in\mathcal{E}\,;\,\supp\varphi\subset K\}$, where $K$ is a compact subset of $\R^n$, a subset $B$ of $\mathcal{D}$ is bounded if it is bounded in $\mathcal{E}$ and it exists a compact subset of $\R^n$, that contains $\supp\varphi$ for every $\varphi\in B$, see Th\'{e}or\`{e}me IV in \cite[p.~69]{SCHW}.
\\\par
We  will see that the continuity of several multiplications is equivalent (via Fourier transform) to the continuity of a corresponding convolution, since the Fourier transform yields isomorphisms $\mathcal{S}\rightarrow \mathcal{S}$, $\mathcal{S'}\rightarrow \mathcal{S'}$, $\om\rightarrow \oc'$ and $\oc\rightarrow \om'$---see Th. XII, p.~249, and Th. XV, p.~268, in \cite{SCHW}---and since the Convolution Theorem mentioned after Lemma 1 holds.

\subsection{Investigation of continuity}

Every occurring multiplication $\cdot\colon E\times M(E)\rightarrow E$ and convolution $\ast \colon E\times C(E)\rightarrow E$ is separately continuous (use, for example, a closed graph theorem), and since all of these function and distribution spaces are barrelled, these mappings are even hypocontinuous (see Theorem 2 in \cite[p.~360]{HOR}). Convergent sequences are bounded and hypocontinuous maps are continuous if one component is restricted to a bounded set and, thus, hypocontinuous maps are in particular sequentially continuous. \par
Since all of the multiplications and convolutions are separately continuous, the algebras $(\mathcal{D}_{L^\infty},\cdot)$, $(\oc,\cdot)$, $(\om,\cdot)$, $(\mathcal{E},\cdot)$, $(\mathcal{E'},\ast)$, $(\om',\ast)$, $(\oc',\ast)$, $(\mathcal{D}'_{L^1},\ast)$ are topological algebras in the sense of \cite[p.~4]{MAL86} and \cite[p.~6]{FRA05}. In the rest of this article, the notion of a topological algebra will be used in the sense of \cite[p.~177]{BNS77} and \cite[Chap. 20]{VDW91}, i.e. it denotes an algebra with (jointly) continuous operations. We will now justify the symbols 'x' and 'o' in the multiplier-convolutor-table.

In most of the following proofs, we will use the seminorms of Section 3.1. Let us recall that a bilinear mapping $b\colon E\times F\rightarrow G$ between locally convex topological vector spaces $E,F,G$ is continuous if and only if for every continuous seminorm $p_1$ on $G$ there exist continuous seminorms $p_2$ and $p_3$ on $E$ and $F$, respectively, such that the inequality $$p_1(b(v,w))\leq p_2(v)p_3(w)$$ holds for every pair $(v,w)\in E\times F$.

\begin{proposition} 
The mapping $\mathcal{D}\times\mathcal{E}\stackrel{\cdot}{\rightarrow}\mathcal{D} $ is discontinuous.
\end{proposition}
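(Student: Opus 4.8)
The plan is to exhibit, for a fixed neighbourhood of zero in $\mathcal{D}$, a sequence of pairs $(\varphi_j, f_j) \in \mathcal{D} \times \mathcal{E}$ with $\varphi_j \to 0$ in $\mathcal{D}$ and $f_j$ bounded in the sense needed to defeat joint continuity, such that $\varphi_j f_j$ does not go to zero in $\mathcal{D}$. Concretely, I would use the seminorm characterisation recalled just before the proposition: joint continuity of $\cdot\colon \mathcal{D}\times\mathcal{E}\to\mathcal{D}$ would mean that for the norm $p_{\boldsymbol{m},\boldsymbol{\varepsilon}}$ on the target there are a continuous seminorm $q$ on $\mathcal{D}$ and a continuous seminorm $p_{m,K}$ on $\mathcal{E}$ with $p_{\boldsymbol{m},\boldsymbol{\varepsilon}}(\varphi f) \le q(\varphi)\, p_{m,K}(f)$ for all $\varphi, f$. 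The key asymmetry to exploit is that $p_{m,K}(f)$ only sees $f$ on the fixed compact set $K$, whereas by choosing $\boldsymbol{m}$ to grow and $\boldsymbol{\varepsilon}$ to decay one can make $p_{\boldsymbol{m},\boldsymbol{\varepsilon}}$ weight behaviour of $\varphi f$ arbitrarily far out, where $f$ is completely unconstrained.

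The concrete construction I would carry out: pick a bump $\chi \in \mathcal{D}$ supported in the unit ball with $\chi \not\equiv 0$, and translate it, setting $\varphi_j(x) = c_j\, \chi(x - x_j)$ where $|x_j| \to \infty$ and $c_j \to 0$ is a scalar sequence to be tuned. Then $\varphi_j \to 0$ in $\mathcal{D}$ provided the supports all lie in one compact set — but translations escape to infinity, so instead I would keep the supports inside a fixed compact set, say the unit ball, and push the oscillation/growth into $f_j$ alone: take $\varphi_j(x) = c_j \chi(x)$ with a single fixed bump and $c_j \to 0$, and $f_j$ a $\mathcal{C}^\infty$ function which agrees with $1$ on the unit ball (so $p_{0,K}(f_j)$ stays bounded for $K$ the unit ball) but whose high derivatives on the unit ball are forced to grow — however, on a fixed compact set a single $\mathcal{C}^\infty$ function already has all derivatives bounded, so one $f$ with all $p_{m,K}(f)$ finite will not by itself break the estimate against a fixed $K$. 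The correct route, then, is to let $K$ be the unknown compact set chosen by the adversary and beat it: given any $K$, choose the bump $\chi$ and the test functions $\varphi_j = c_j\chi$ supported in a ball containing $K$, and pick $f_j \in \mathcal{E}$ equal to $1$ on $K$ but with $p_{\boldsymbol{m},\boldsymbol{\varepsilon}}(\chi f_j) \to \infty$ while $c_j \to 0$ slowly enough that $p_{\boldsymbol{m},\boldsymbol{\varepsilon}}(\varphi_j f_j) = c_j\, p_{\boldsymbol{m},\boldsymbol{\varepsilon}}(\chi f_j)$ still fails to tend to $0$; since $p_{m,K}(f_j) = 1$ is bounded, the putative inequality $c_j p_{\boldsymbol{m},\boldsymbol{\varepsilon}}(\chi f_j) \le q(c_j \chi) \cdot 1 = c_j q(\chi)$ is violated once $p_{\boldsymbol{m},\boldsymbol{\varepsilon}}(\chi f_j) > q(\chi)$, which holds eventually by making the derivatives of $f_j$ outside $K$ large. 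So in fact the scalar $c_j$ drops out and it suffices to produce, for each continuous seminorm $q$ on $\mathcal{D}$ and each $\boldsymbol{m},\boldsymbol{\varepsilon}$, a single $f \in \mathcal{E}$ with $f = 1$ near the support of $\chi$-translates used and $p_{\boldsymbol{m},\boldsymbol{\varepsilon}}(\chi f)$ as large as desired — which is easy by choosing $f$ to have a large high-order derivative at a point where the fixed bump $\chi$ is supported, using a sequence $\boldsymbol{m}$ with $m_0$ large.

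I expect the main obstacle to be organising the quantifiers cleanly: continuity fails means the negation of "there exist $q, p_{m,K}$ such that for all $\varphi,f$ …", so one must, given an arbitrary candidate pair $(q, p_{m,K})$, produce a single counterexample pair $(\varphi, f)$; the cleanest packaging is probably to fix one target norm $p_{\boldsymbol{m},\boldsymbol{\varepsilon}}$ with $\boldsymbol{m}$ growing fast, fix $\varphi = \chi$ a bump near the origin, and for the given $K$ and $q$ choose $f \in \mathcal{E}$ with $f \equiv 1$ on a ball containing $\supp\chi$ and $\supp\chi \subset K$ if possible, or else note $p_{m,K}(f)$ can be held $\le 1$ by a cutoff while a derivative $\partial^\alpha f$ with $|\alpha| = m_0$ is made huge at a point in $\supp\chi \setminus K$. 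One must double-check that such $f$ exists in $\mathcal{C}^\infty$ — standard, glue a polynomial-times-bump onto the constant $1$ — and that $p_{\boldsymbol{m},\boldsymbol{\varepsilon}}(\chi f)$ is genuinely controlled from below by $|\partial^\alpha(\chi f)|$ at that point via the Leibniz rule, which is routine. The whole argument is then the observation that the $\mathcal{E}$-seminorm $p_{m,K}$ is "local" while the $\mathcal{D}$-norm is "global," so no product estimate between them can hold.
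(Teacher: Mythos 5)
Your core argument (the middle paragraph of your proposal) is correct and rests on the same mechanism as the paper's proof: the seminorms $p_{m,K}$ of $\mathcal{E}$ only see $f$ on the compact set $K$, while every continuous seminorm used on the target $\mathcal{D}$ may be taken to be a norm; so, the target seminorm being fixed first, you defeat a given pair $(q,p_{m,K})$ by choosing the test pair \emph{after} $K$ is known, with $\varphi$ supported partly outside $K$ and $f$ tame on $K$ but wild where $p_{m,K}$ is blind. The paper does exactly this in a degenerate and much shorter form: given $p_{m,K}$, pick $0\neq f\in\mathcal{E}$ with $p_{m,K}(f)=0$ (e.g.\ $f$ supported off $K$) and $\varphi\in\mathcal{D}$ with $\varphi f\neq 0$; then $p(\varphi f)>0=\tilde p(\varphi)\,p_{m,K}(f)$ for an arbitrary norm $p$ on $\mathcal{D}$, so no quantitative blow-up, no Leibniz estimate and no special choice of $\boldsymbol{m},\boldsymbol{\varepsilon}$ are needed. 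Your quantitative variant also works, provided you take $f\equiv 1$ on a \emph{neighbourhood} of $K$ (not merely on $K$) so that $p_{m,K}(f)=1$, and it suffices to make $f$ large in value at a point of $\supp\varphi\setminus K$ where $\varphi\neq 0$: the $\nu=0$ part of $p_{\boldsymbol{m},\boldsymbol{\varepsilon}}$ already dominates $\tfrac{1}{\varepsilon_0}\Vert\cdot\Vert_\infty$, so no high-order derivatives of $f$ and no growth condition on $\boldsymbol{m}$ are required.

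You should, however, discard your final ``cleanest packaging''. There you fix $\varphi=\chi$ near the origin before the adversary chooses $(q,p_{m,K})$; if the adversary then takes $K\supset\supp\chi$ and $q=p_{\boldsymbol{m},\boldsymbol{\varepsilon}}$, the fallback ``make $\partial^\alpha f$ huge at a point of $\supp\chi\setminus K$'' is unavailable (that set is empty), while the option $f\equiv 1$ near $\supp\chi$ gives $\chi f=\chi$ and $p_{m,K}(f)\geq 1$, so $p_{\boldsymbol{m},\boldsymbol{\varepsilon}}(\chi f)\leq q(\chi)\,p_{m,K}(f)$ and no contradiction arises. The quantifier structure forces the order of your middle paragraph: the target seminorm is fixed first, but both $\varphi$ and $f$ must be chosen after $q$ and $K$, with $\supp\varphi$ meeting the complement of $K$.
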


\begin{proof}
We have to show that there is a seminorm $p$ on $\mathcal{D}$ such that for every pair of seminorms $\tilde{p}$ and $p_{m,K}$ on $\mathcal{D}$ and $\mathcal{E}$, respectively, there exists a pair $(\varphi,f)\in\mathcal{D}\times\mathcal{E}$ with $$p(\varphi\cdot f)>\tilde{p}(\varphi)p_{m,K}(f).$$ We can choose an arbitrary norm $p$ on $\mathcal{D}$. If $p_{m,K}$ is defined by the compact set $K$ and $m\in\N_0$, we can find $0 \neq f\in\mathcal{E}$ with $p_{m,K}(f)=0$. If we take $\varphi\in\mathcal{D}$ with $\varphi\cdot f \neq 0$ we have $p(\varphi\cdot f)>0=\tilde{p}(\varphi)p_{m,K}(f)$. 
\end{proof}

In fact, if $\mathcal{E}$ is replaced by $\mathcal{D}$, we obtain the continuous multiplication $\mathcal{D}\times\mathcal{D}\stackrel{\cdot}{\rightarrow}\mathcal{D}$ \cite{HSTH01}, making $(\mathcal{D},\cdot)$ a topological algebra.

\begin{proposition}
The regularization mappings $\mathcal{D}\times\mathcal{E'}\stackrel{\ast}{\rightarrow}\mathcal{D}$, $\mathcal{S}\times\oc'\stackrel{\ast}{\rightarrow}\mathcal{S}$, $\mathcal{D}_{L^p}\times\mathcal{D}'_{L^1}\stackrel{\ast}{\rightarrow}\mathcal{D}_{L^p}$, $\bp\times\mathcal{D}'_{L^1}\stackrel{\ast}{\rightarrow}\bp$, $\oc\times\oc'\stackrel{\ast}{\rightarrow}\oc$, $\om\times\om'\stackrel{\ast}{\rightarrow}\om$ and $\mathcal{E}\times\mathcal{E'}\stackrel{\ast}{\rightarrow}\mathcal{E}$ are discontinuous. 
\end{proposition}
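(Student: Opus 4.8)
The plan is to prove discontinuity of each of the seven convolution mappings by the same mechanism used in Proposition 2: for a bilinear map $b\colon E\times F\to E$, continuity would force an inequality $p_1(b(v,w))\le p_2(v)p_3(w)$ for every continuous seminorm $p_1$ on the target, with $p_2,p_3$ continuous seminorms on the factors; so it suffices to exhibit a single "bad" seminorm $p_1$ on the target together with, for each candidate pair $(p_2,p_3)$, a pair $(\varphi,T)$ violating the inequality. The natural choice is to make $T$ a point mass $\delta_a$ for a parameter $a\in\R^n$ that we push to infinity: convolution by $\delta_a$ is translation, $\varphi\ast\delta_a=\varphi(\cdot-a)$, which is an isometry (or at least a bounded operation) for the translation-invariant target seminorms $p_1$ but is made arbitrarily small in the $\math{R}^n$-localized seminorms $p_3$ on the convolutor side. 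Concretely, $\delta_a$ sits in each of the convolutor spaces $\mathcal{E}'$, $\oc'$, $\mathcal{D}'_{L^1}$, $\om'$, and the relevant seminorms on those spaces are suprema of the evaluation over bounded subsets of the predual; since the bounded subsets of $\mathcal{E}$, $\oc$, $\om$, $\mathcal{D}_{L^p}$, $\bp$ all consist of functions that decay (or, for $\mathcal{E}$, are controlled on compacta) this gives $p_3(\delta_a)\to 0$ as $|a|\to\infty$, while a fixed nonzero $\varphi$ translated to $\varphi(\cdot-a)$ keeps $p_1$ bounded below by a positive constant. That immediately breaks any putative inequality once $|a|$ is large enough.

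I would organize the write-up case by case, but grouping the seven maps by which target seminorm and which predual-boundedness fact is invoked. First the easy extreme cases: for $\mathcal{E}\times\mathcal{E}'\to\mathcal{E}$ take $p_1=p_{m,K}$ for a fixed compact $K$ and a fixed $\varphi\in\mathcal{D}$ with $\varphi|_K\ne 0$; then $p_1(\varphi(\cdot-a))$ eventually vanishes too, so here one instead chooses $K$ large relative to $a$ — more cleanly, pick $p_1 = p_{0,K}$ and exploit that $p_3(\delta_a)=\sup_{g\in B}|g(a)|$ can be made $<\varepsilon$ for a fixed bounded $B\subset\mathcal{E}$ while keeping $\varphi(\cdot-a)|_K\ne0$ by enlarging $K$ along with $a$; since $p_1$ may be chosen a posteriori this is legitimate. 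For $\mathcal{D}\times\mathcal{E}'\to\mathcal{D}$ the same translation trick works but one must additionally note that translates of a single $\varphi\in\mathcal{D}$ stay in a single $\mathcal{D}_K$ only if $K$ grows, so again choose the target norm and the compact set depending on $a$. For $\mathcal{S}\times\oc'\to\mathcal{S}$, $\oc\times\oc'\to\oc$, $\om\times\om'\to\om$, $\mathcal{D}_{L^p}\times\mathcal{D}'_{L^1}\to\mathcal{D}_{L^p}$ and $\bp\times\mathcal{D}'_{L^1}\to\bp$, use the translation-invariant norms of those Fréchet spaces as $p_1$ (they are genuinely translation-invariant, being built from $\sup$/$L^p$ norms of derivatives over all of $\R^n$, or over $\psi\cdot\partial^\alpha f$ with $\psi\in\mathcal{S}$ in the $\om$ case — where one picks $\psi$ not vanishing near the point where $\varphi(\cdot-a)$ lives), so that $p_1(\varphi\ast\delta_a)=p_1(\varphi(\cdot-a))$ is constant in $a$ (for $\om$: bounded below, after choosing $\psi$), while $p_3(\delta_a)=\sup_{h\in B}|\langle h,\delta_a\rangle|=\sup_{h\in B}|h(a)|\to0$ because each bounded $B$ in $\bp$, $\mathcal{D}_{L^p}$, $\oc$ consists of functions vanishing at infinity uniformly over $B$ — for $\oc$ this uses that $B$ bounded means there is a single $k$ with $(1+|x|^2)^{-k}\partial^\alpha h(x)$ uniformly in $\mathcal{C}_0$, and the predual of $\oc'$ whose bounded sets we need is $\oc$ itself with that description.

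The main obstacle is the $\om$ case (and to a lesser extent $\mathcal{E}$): the defining norms $p_{m,\psi}(f)=\sup_{|\alpha|\le m}\|\psi\cdot\partial^\alpha f\|_\infty$ are not translation invariant, because the weight $\psi$ is fixed while the support of $f=\varphi(\cdot-a)$ drifts off to infinity where $\psi$ is small. The fix is that $p_1$ is quantified \emph{before} $p_3$ only in the sense that $p_1$ is chosen first in the logical structure of "discontinuous"; but actually for discontinuity we get to pick $p_1$ once and for all, then the adversary picks $(p_2,p_3)$, then we pick $(\varphi,T)$ — so $\psi$ in $p_1$ is fixed in advance and we cannot move it. The remedy is to not translate: instead, for $\om\times\om'\to\om$, take $T=(1+|x|^2)^k\delta_0$-type objects, or better, fix $p_1=p_{0,\psi}$ with $\psi(x)=e^{-|x|^2}$ and take $T_a = \delta_0$ but vary $\varphi=\varphi_a$ to be a fixed bump scaled so that $p_1(\varphi_a)$ stays $\ge c>0$ while $p_3(\varphi_a\text{-related data})$... — cleaner still, use that $\om\cong\oc'$ via Fourier transform and that the Convolution Theorem (quoted after Lemma 1) turns $\om\times\om'\to\om$ into the multiplication $\oc'\times\oc'\to\oc'$, whose discontinuity will be Proposition 4; but that would be circular if Proposition 4's proof reduces back. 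So the honest route for $\om$: keep $T=\delta_a$, keep a fixed bump $\varphi$, but replace the translate by $\varphi_a(x)=\varphi(x-a)$ and observe $p_{0,\psi}(\varphi_a)=\sup_x|\psi(x)\varphi(x-a)|$, which does tend to $0$ — so this choice fails. Instead dilate: $\varphi_a(x)=\varphi(x/a)$ is not in... The workable choice is $T=\delta_a$ and $\varphi$ fixed but we use the target seminorm $p_{0,\psi}$ with $\psi$ NOT specified in advance — which we can't. Therefore I expect the cleanest argument for $\om$ (and $\oc$, $\mathcal{S}$) to instead exhibit, for fixed $p_1$, a \emph{sequence} $(\varphi_j,T_j)$ with $p_1(\varphi_j\ast T_j)\ge1$ but $p_2(\varphi_j)p_3(T_j)\to0$ for \emph{all} continuous $p_2,p_3$ simultaneously — using $T_j=\delta_{a_j}$ with $|a_j|\to\infty$ and $\varphi_j$ a bump of \emph{growing height} $c_j$ centered at a \emph{fixed} point, chosen so $\|\psi\cdot\varphi_j\|_\infty\ge1$ for the worst $\psi$ while... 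This is the delicate bookkeeping I would flag as the real content; the other six maps are routine once the translation trick is set up, and I would present them briskly and spend the bulk of the proof on making the $\om$ (equivalently, via $\mathcal{F}$, the $\oc'$-multiplication) estimate precise, likely by choosing in $\om$ the seminorm $p_{0,\psi}$ with $\psi$ strictly positive everywhere (e.g. $\psi(x)=(1+|x|^2)^{-n}$, which lies in $\mathcal{S}$) so that $p_{0,\psi}(\varphi(\cdot-a))=\sup_x\psi(x)|\varphi(x-a)|\ge \psi(a+x_0)|\varphi(x_0)|$ decays only polynomially, and pairing it against $p_3(\delta_a)=\sup_{h\in B}|h(a)|$ which, for $B$ bounded in $\om$, decays faster than any fixed polynomial-inverse only if... — since bounded sets in $\om$ need only satisfy $\sup_{h\in B}\|\psi'\partial^\alpha h\|_\infty<\infty$ for each $\psi'\in\mathcal{S}$ and each $m$, i.e. $|h(a)|\le C_{\psi'}/\psi'(a)$, one can take $\psi'$ decaying as fast as one likes, so $\sup_{h\in B}|h(a)|$ can still be forced to beat $\psi(a+x_0)$ by choosing $\varphi$ with larger support — hence the ratio $p_1/(p_2p_3)$ blows up. That comparison of decay rates between the fixed weight in $p_1$ and the arbitrarily-fast-decaying weight available in $p_3$ is exactly the crux, and it works in our favor.
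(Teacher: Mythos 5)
Your argument hinges on the claim that $p_3(\delta_a)=\sup_{h\in B}|h(a)|\to 0$ as $|a|\to\infty$ for \emph{every} continuous seminorm $p_3$ of the convolutor space, and this claim is false for every convolutor space in the proposition. The strong seminorms there are suprema over bounded subsets $B$ of the preduals $\mathcal{E}$, $\oc$, $\om$, $\bp$, $\mathcal{D}_{L^p}$, and bounded subsets of these spaces need not decay uniformly at infinity: the set of all translates $\{h(\cdot-b)\,;\,b\in\R^n\}$ of one bump $h\in\mathcal{D}$ is bounded in $\bp$ and in $\mathcal{D}_{L^p}$ (the defining norms are translation invariant), and for this $B$ one has $\sup_{g\in B}|g(a)|=\Vert h\Vert_\infty$ for all $a$; bounded subsets of $\mathcal{E}$, $\oc$, $\om$ may even contain constants or polynomially growing functions (recall $1\in\oc$), so for the $\mathcal{S}\times\oc'$ case $p_B(\delta_a)$ can \emph{grow} with arbitrarily high polynomial order, chosen by the adversary after your $p_1$ (with its fixed $\beta$) is on the table. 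So the trouble you flag only for $\om$ and $\mathcal{E}$ (non-translation-invariant target seminorms), and leave unresolved --- your closing paragraph trails off without a completed estimate --- in fact afflicts all seven maps, including the ones you call routine; note also that $\{\delta_a\,;\,a\in\R^n\}$ is bounded in $\mathcal{D}'_{L^1}$, so by hypocontinuity no family of the form $(\varphi,\delta_a)$ with fixed $\varphi$ can ever witness discontinuity of $\mathcal{D}_{L^p}\times\mathcal{D}'_{L^1}\to\mathcal{D}_{L^p}$ or $\bp\times\mathcal{D}'_{L^1}\to\bp$. Your side remark on Fourier transform is also off: $\mathcal{F}(\om')=\oc$, so $\om\times\om'\stackrel{\ast}{\to}\om$ corresponds to the multiplication $\oc'\times\oc\stackrel{\cdot}{\to}\oc'$ (Proposition 6), not to $\oc'\times\oc'\stackrel{\ast}{\to}\oc'$, which is continuous by Proposition 5.

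The mechanism that actually produces discontinuity is the order of differentiation, not behaviour at infinity (cf. Remark 4). The paper first reduces all seven statements to one: $\mathcal{D}\times\mathcal{E'}$ embeds continuously into each domain and each target embeds continuously into $\mathcal{E}$, so it suffices to show $\mathcal{D}\times\mathcal{E'}\stackrel{\ast}{\rightarrow}\mathcal{E}$ is discontinuous. Then, with the target seminorm $f\mapsto\sup_{|x|\leq1}|f(x)|$ fixed, given $p_{\boldsymbol{m},\boldsymbol{\varepsilon}}$ on $\mathcal{D}$ and $p_B$ on $\mathcal{E'}$ one takes $T=\partial^\gamma\delta$ with $|\gamma|=m_0+1$ (one order more than the seminorm on $\mathcal{D}$ sees on the unit ball) and a bump $f$ supported in the unit ball, and \emph{dilates}: $\tilde f(x)=f(cx)$. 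Then $\sup_{|x|\leq1}|(\tilde f\ast T)(x)|$ scales like $c^{m_0+1}$ while $p_{\boldsymbol{m},\boldsymbol{\varepsilon}}(\tilde f)\leq c^{m_0}p_{\boldsymbol{m},\boldsymbol{\varepsilon}}(f)$, so for $c$ large the continuity estimate is violated --- exactly the dilation device you briefly consider ("Instead dilate:~\dots") and abandon, combined with a high-order derivative of $\delta$ rather than a translated $\delta_a$. An alternative for the $\bp$, $\oc$, $\om$, $\mathcal{E}$ rows, also given in the paper, is to observe that continuity of $E\times E'\stackrel{\ast}{\to}E$ would make the evaluation $E\times E'\to\C$ continuous, which happens only for normed $E$.
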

\begin{proof}
We will prove the discontinuity of all these regularizations at once. Since $\mathcal{D}\times\mathcal{E'}$ is continuously embedded in the domains of these mappings and their target spaces are embedded in $\mathcal{E}$ it suffices to prove that $\mathcal{D}\times\mathcal{E'}\stackrel{\ast}{\rightarrow}\mathcal{E}$ is discontinuous.  \par 
Let us assume that this convolution is continuous. Since $f\mapsto \sup_{|x|\leq 1}|f(x)|$ is a continuous seminorm on $\mathcal{E}$, for every seminorm $p_{\boldsymbol{m},\boldsymbol{\varepsilon}}$ on $\mathcal{D}$, every bounded subset $B$ of $\mathcal{E}$--defining a seminorm $p_B$ on $\mathcal{E'}$--
$$\sup_{|x|\leq 1}|(f\ast T)(x)|\leq p_{\boldsymbol{m},\boldsymbol{\varepsilon}}(f)p_B(T)$$ holds for every pair $(f,T)\in\mathcal{D}\times\mathcal{E'}$.
\par 
To attain a contradiction we choose $\gamma\in\N_{0}^{n}$ with $|\gamma|=m_0+1$, $T=\partial^\gamma\delta$, $f\in\mathcal{D}\backslash\{0\}$ with $\supp f\subset\{x\in\R^n\,;\, |x|\leq 1\}$ and $c=(p_B(T)+1)p_{\boldsymbol{m},\boldsymbol{\varepsilon}}(f)/\sup_{|x|\leq 1}|\partial^\gamma {f}(x)|$. 
By assumption we have $c>1$ and hence the support of the function $\tilde{f}\in\mathcal{D}$ defined by $\tilde{f}(x)=f(cx)$ is also contained in $\{x\,;\, |x|\leq 1\}$. 
Furthermore it holds $$\sup_{|x|\leq 1}|(\tilde{f}\ast T)(x)|=\sup_{|x|\leq 1}|\partial^\gamma \tilde{f}(x)|=c^{m_0+1}\sup_{|x|\leq 1}|\partial^\gamma {f}(x)|$$ and 
$$p_{\boldsymbol{m},\boldsymbol{\varepsilon}}(\tilde{f})=\sup_{|\alpha|\leq m_0}\tfrac{1}{\varepsilon_0}\Vert \partial^\alpha\tilde{f}\Vert_\infty\leq c^{m_0}\sup_{|\alpha|\leq m_0}\tfrac{1}{\varepsilon_0}\Vert \partial^\alpha f \Vert_\infty=c^{m_0} p_{\boldsymbol{m},\boldsymbol{\varepsilon}}(f).$$
But this yields $$\sup_{|x|\leq 1}|(\tilde{f}\ast T)(x)|\geq (p_B(T)+1) p_{\boldsymbol{m},\boldsymbol{\varepsilon}}(\tilde{f})> p_B(T) p_{\boldsymbol{m},\boldsymbol{\varepsilon}}(\tilde{f}).$$
\end{proof}

\hspace{-0.51cm}\textit{Alternative proof for the regularization mappings on $\bp$, $\oc$, $\om$ or $\mathcal{E}$.}
Let $E$ be one of the spaces $\bp$, $\oc$, $\om$ or $\mathcal{E}$ and assume that the convolution $E\times E'\stackrel{\ast}{\rightarrow} E$ is continuous. For $f\in E$ and $T\in E'$ we can write $(f\ast T)(x)$ as $\langle f(x-y), T(y)\rangle$. If we apply $\delta_0(x)$ we get $\langle f(-y), T(y)\rangle=\langle \check{f}, T\rangle$, where $\check{f}(x):=f(-x)$. The Dirac delta is in $E'$, thus the evaluation $(f,T)\mapsto \langle f,T\rangle=T(f)$ on $E\times E'$ is continuous, as it is the composition of the continuous mappings $\delta\colon E\rightarrow \C$, $\ast\colon E\times E'\rightarrow E$ and $E\rightarrow E,f\mapsto \check{f}$. But the evaluation on $E \times E'$ is only continuous if $E$ is a normed space (see \cite{HOR}, p.~359), which is not the case for the spaces $\bp$, $\oc$, $\om$ and $\mathcal{E}$.

$\hfill\Box$
\\\par
By Fourier transform, the discontinuity of the convolutions $\oc\times\oc'\stackrel{\ast}{\rightarrow}\oc$ and $\om\times\om'\stackrel{\ast}{\rightarrow}\om$ is equivalent to the discontinuity of the multiplications $\om'\times\om\stackrel{\cdot}{\rightarrow}\om'$ and $\oc'\times\oc\stackrel{\cdot}{\rightarrow}\oc'$, respectively. But we will also prove the discontinuity of these multiplications among others in Proposition 6. \par
The assertion of Proposition 2 obviously also holds for the regularization $\mathcal{D}_{L^\infty}\times \mathcal{D}'_{L^1}\stackrel{\ast}{\rightarrow}\mathcal{D}_{L^\infty}$. \par Note also that if we replace $\mathcal{E'}$ by $\mathcal{D}$ in the convolution $\mathcal{D}\times\mathcal{E'}\stackrel{\ast}{\rightarrow}\mathcal{D}$, we obtain the continuous convolution $\mathcal{D}\times\mathcal{D}\stackrel{\ast}{\rightarrow}\mathcal{D}$ \cite{HSTH01}. Hence $(\mathcal{D},\ast)$ is a topological algebra too.

\begin{proposition}\label{prop:ff}
Let $1\leq p,q\leq\infty$. The multiplications $\mathcal{D}_{L^p}\times \mathcal{D}_{L^\infty}\stackrel{\cdot}{\rightarrow} \mathcal{D}_{L^p}$,\linebreak $\bp\times \mathcal{D}_{L^\infty}\stackrel{\cdot}{\rightarrow} \,\bp$ and $\mathcal{E}\times \mathcal{E}\stackrel{\cdot}{\rightarrow} \mathcal{E}$ and the convolutions $\mathcal{D}'_{L^q}\times \mathcal{D}'_{L^1}\stackrel{\ast}{\rightarrow} \mathcal{D}'_{L^q}$ and \linebreak $\mathcal{E'}\times \mathcal{E'}\stackrel{\ast}{\rightarrow} \mathcal{E'}$ are continuous, since they are bilinear, hypocontinuous maps, defined on the product of two \text{Fr\'{e}chet} spaces or (DF)-spaces (i.e. duals of Fr\'{e}chet spaces), respectively (see Th. 2 in \cite[p.~64]{GRO54}). In particular, the algebras $(\mathcal{D}_{L^\infty},\cdot)$, $(\mathcal{E},\cdot)$, $(\mathcal{E'},\ast)$ and $(\mathcal{D}'_{L^1},\ast)$ are even topological algebras.
\end{proposition}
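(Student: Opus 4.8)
The plan is to obtain all six families of continuity statements from a single structural theorem, so that the proof amounts to two routine verifications together with one citation. The structural input is Grothendieck's theorem \cite[Th.~2, p.~64]{GRO54}: a hypocontinuous bilinear map from the product of two metrizable locally convex spaces, or of two (DF)-spaces, into an arbitrary locally convex space is jointly continuous. Everything else is bookkeeping: one must check that each map is hypocontinuous and that its domain is a product of two Fr\'echet spaces or a product of two (DF)-spaces.

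The hypocontinuity is already in hand. As observed at the opening of Section~3.2, every multiplication $\cdot\colon E\times M(E)\to E$ and convolution $\ast\colon E\times C(E)\to E$ under consideration is separately continuous, and since all the function and distribution spaces occurring here are barrelled, separate continuity is automatically upgraded to hypocontinuity by \cite[Th.~2, p.~360]{HOR}. So there is nothing new to do in this step; hypocontinuity may simply be quoted. (For the Fr\'echet cases one could instead invoke the Baire-category argument and get by with mere separate continuity, but the uniform formulation via hypocontinuity is cleaner.)

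Next I would read off the topological type of each factor from Section~3.1. The spaces $\mathcal{D}_{L^p}$ for $1\le p<\infty$, $\bp$, $\mathcal{D}_{L^\infty}$ and $\mathcal{E}$ are Fr\'echet spaces, so $\mathcal{D}_{L^p}\times\mathcal{D}_{L^\infty}$, $\bp\times\mathcal{D}_{L^\infty}$, $\mathcal{E}\times\mathcal{E}$, and---for the endpoint $p=\infty$---$\mathcal{D}_{L^\infty}\times\mathcal{D}_{L^\infty}$, are products of two Fr\'echet spaces. Dually, $\mathcal{D}'_{L^q}$ is the strong dual of the Fr\'echet space $\mathcal{D}_{L^{q'}}$ with $1/q+1/q'=1$ (so $1\le q'<\infty$; for $q=\infty$ this reads $\mathcal{D}'_{L^\infty}=(\mathcal{D}_{L^1})'$), $\mathcal{D}'_{L^1}$ is the strong dual of $\bp$, and $\mathcal{E}'$ is the strong dual of $\mathcal{E}$; hence $\mathcal{D}'_{L^q}\times\mathcal{D}'_{L^1}$ (including the endpoint $q=1$) and $\mathcal{E}'\times\mathcal{E}'$ are products of two (DF)-spaces. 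Applying \cite[Th.~2, p.~64]{GRO54} case by case then gives joint continuity of all the listed maps. For the final ``in particular'' clause I would merely specialize: by the table in Section~2 one has $M(\mathcal{D}_{L^\infty})=\mathcal{D}_{L^\infty}$, $M(\mathcal{E})=\mathcal{E}$, $C(\mathcal{E}')=\mathcal{E}'$ and $C(\mathcal{D}'_{L^1})=\mathcal{D}'_{L^1}$, so the $p=\infty$ and $q=1$ instances just obtained say precisely that the products on $\mathcal{D}_{L^\infty}$, $\mathcal{E}$, $\mathcal{E}'$ and $\mathcal{D}'_{L^1}$ are jointly continuous, i.e.\ these are topological algebras in the sense fixed at the start of Section~3.2.

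There is no serious obstacle: the substance of the proposition is the substance of the cited theorem, and the work is just to sort the domains correctly into ``Fr\'echet $\times$ Fr\'echet'' and ``(DF) $\times$ (DF)''. The one point that must not be skipped is that, in the (DF) case, Grothendieck's theorem requires \emph{hypocontinuity}, not merely separate continuity, as hypothesis; this is where the preliminary step---ultimately the barrelledness of $\mathcal{D}'_{L^q}$, $\mathcal{D}'_{L^1}$ and $\mathcal{E}'$---is genuinely used and cannot be bypassed.
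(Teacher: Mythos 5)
Your argument is correct and is essentially the paper's own: the paper proves the proposition exactly by quoting the hypocontinuity established at the start of Section~3.2 (separate continuity plus barrelledness) and then applying Grothendieck's theorem \cite[Th.~2, p.~64]{GRO54} to the Fr\'echet\,$\times$\,Fr\'echet and (DF)\,$\times$\,(DF) domains. Your extra bookkeeping (identifying the preduals of $\mathcal{D}'_{L^q}$, $\mathcal{D}'_{L^1}$, $\mathcal{E}'$ and noting that hypocontinuity, not mere separate continuity, is what the (DF) case requires) only makes explicit what the paper leaves implicit.
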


Although not appearing in the multiplier-convolutor-table, the multiplications $\mathcal{S}\times\mathcal{S}\stackrel{\cdot}{\rightarrow}\mathcal{S}$, $\bp\times \bp\,\stackrel{\cdot}{\rightarrow} \,\bp\,$ and $\mathcal{D}_{L^p}\times \mathcal{D}_{L^p}\stackrel{\cdot}{\rightarrow} \mathcal{D}_{L^p}$ and the convolutions $\mathcal{S}\times\mathcal{S}\stackrel{\ast}{\rightarrow}\mathcal{S}$ and $\mathcal{D}_{L^1}\times\mathcal{D}_{L^1}\stackrel{\ast}{\rightarrow}\mathcal{D}_{L^1}$ are also well-defined. Furthermore by the same argument as that of Proposition 3, they are continuous, making $(\mathcal{S},\cdot)$, $(\bp,\cdot)$, $(\mathcal{D}_{L^p},\cdot)$, $(\mathcal{S},\ast)$ and $(\mathcal{D}_{L^1},\ast)$ topological algebras too. \\\par

The proof of the following proposition is due to Peter Wagner.

\begin{proposition}
The mappings $\oc\times\oc\stackrel{\cdot}{\rightarrow}\oc$ and $\om'\times\om'\stackrel{\ast}{\rightarrow}\om'$ are discontinuous.
\end{proposition}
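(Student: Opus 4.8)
The plan is to prove that $\oc\times\oc\stackrel{\cdot}{\rightarrow}\oc$ is discontinuous by exhibiting a defining norm $p_{m,\psi}$-type seminorm on the target (using the system $p_{m,\psi}(f)=\sup_{|\alpha|\le m}\Vert\psi\cdot\partial^\alpha f\Vert_\infty$ for $\om$ after noting $\oc\subset\om$, or better a seminorm native to $\oc$) for which no estimate of the form $p(f\cdot g)\le p_1(f)\,p_2(g)$ can hold. Recall that $\oc$ carries the inductive-limit topology over $k\in\N_0$ of the Fréchet spaces $\{f : (1+|x|^2)^{-k}\partial^\alpha f\in\mathcal{C}_0 \text{ for all }\alpha\}$; hence a continuous seminorm $p_1$ on $\oc$ is dominated, on each step $k$, by a seminorm of the form $f\mapsto\sup_{|\alpha|\le m}\Vert(1+|x|^2)^{-N_k}\partial^\alpha f\Vert_\infty$, but the weight exponent $N_k$ and the order $m$ are allowed to grow with $k$. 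The heart of the matter is the mismatch: if $f$ lies in step $k_1$ and $g$ in step $k_2$ then $f\cdot g$ lies only in step $k_1+k_2$, and along a sequence of test pairs the weights required on the right can be made to lag behind the weight needed on the left.

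The concrete construction I would use: fix the target seminorm $p(h)=\Vert(1+|x|^2)^{-1}h\Vert_\infty$ (continuous on $\oc$ since it is a continuous seminorm on the $k=1$ step, hence on the inductive limit — this needs a one-line check, or alternatively take $p=p_{0,\psi}$ with $\psi$ a fixed Schwartz function of the $\om$-system, which is automatically continuous on $\oc\hookrightarrow\om$). Then suppose for contradiction that $p(f g)\le p_1(f)p_2(g)$ for some continuous seminorms $p_1,p_2$ on $\oc$. The test functions should be translated bumps $f_j(x)=g_j(x)=\chi(x-a_j)$ with $\chi\in\mathcal{D}$ fixed and $|a_j|\to\infty$, scaled appropriately. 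Because $p_1,p_2$ are continuous on the inductive limit, on the bounded set $\{\chi(\cdot - a):|a|\le R\}$ (which lives in a single step for each $R$... — actually translates leave every step, so here one must instead scale amplitudes) the right side stays controlled while the weighted sup on the left, $(1+|a_j|^2)^{-1}$, forces a growing amplitude. The cleanest version: take $f_j=g_j=\lambda_j\,\chi(\cdot-a_j)$ and choose $\lambda_j$ so that $p_1(f_j)p_2(g_j)=\lambda_j^2\,p_1(\chi(\cdot-a_j))p_2(\chi(\cdot-a_j))$ is bounded while $p(f_jg_j)=\lambda_j^2(1+|a_j|^2)^{-1}\Vert\chi^2\Vert_\infty$ is not — this works provided $p_i(\chi(\cdot-a_j))$ does not decay as fast as $(1+|a_j|^2)^{-1/2}$, which is true since any continuous seminorm on $\oc$ is, on translates of a fixed bump, bounded below by a fixed positive constant (the unweighted sup is a lower bound only up to the weight, so one gets $p_i(\chi(\cdot-a_j))\gtrsim (1+|a_j|^2)^{-N}$ for the step's weight $N$ — and $N$ can be made as large as we like by choosing $\chi$ supported in a region forcing $f_j,g_j$ into a high step, OR conversely the key is that $p$ uses weight $1$ while translating to infinity makes no seminorm see decay faster than the ambient weight).

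The main obstacle I anticipate is pinning down the exact quantitative behavior of an \emph{arbitrary} continuous seminorm $p_i$ on the inductive limit $\oc$ when evaluated on the chosen test family: the inductive-limit description only tells us $p_i$ is bounded by \emph{some} weighted $C^m$-norm on each step, with uncontrolled growth of the parameters, so one must select the test pairs $(f_j,g_j)$ \emph{after} knowing which step they live in. The right way around this is a diagonal/self-referential choice as in the proof of Proposition 2 above: let $p_1,p_2$ be given, let $k_1,k_2,m_1,m_2$ and weight exponents be the parameters they are dominated by on the relevant steps, then build $f_j,g_j$ out of those parameters (sharp bumps whose high derivatives blow up, placed near the origin so the weights are harmless on the right but the Fourier-side... no — placed so that $p$ on the product beats the product of $p_1,p_2$). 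Concretely I expect to reuse the "choose the scaling constant $c$ in terms of the seminorm data" trick of Proposition 2: set $f(x)=g(x)=\chi(cx)$ with $c$ large depending on $p_1,p_2$, so that $p_1(f)p_2(g)\le c^{m_1+m_2}(\text{const})$ while $p(fg)\ge c^{?}(\text{const})$ with the exponent on the left strictly larger, giving the contradiction for $c$ large. The discontinuity of $\om'\times\om'\stackrel{\ast}{\rightarrow}\om'$ then follows for free by Fourier transform, since $\mathcal{F}\colon\oc\to\om'$ is an isomorphism and $\mathcal{F}(fg)=\mathcal{F}f\ast\mathcal{F}g$ on $\oc\times\oc$ by the Convolution Theorem recalled after Lemma~1; a transport-of-structure argument identifies the two discontinuities, exactly as the paper does for Proposition~2's pairs.
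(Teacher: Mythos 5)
The Fourier-transform reduction at the end of your proposal is fine and is exactly how the paper links the two mappings, but the core of your argument --- a direct seminorm estimate defeating $p(fg)\leq p_1(f)p_2(g)$ --- does not go through, and both of your candidate target seminorms fail. The seminorm $p(h)=\Vert(1+|x|^2)^{-1}h\Vert_\infty$ is not even finite on $\oc$ (take $h(x)=(1+|x|^2)^2$), since elements of $\oc$ have arbitrary polynomial growth; so it is not a continuous seminorm on $\oc$ and your ``one-line check'' fails. The fallback $p=p_{0,\psi}$ with $\psi\in\mathcal{S}$ is continuous on $\oc$, but it can never witness discontinuity: by the factorization $\psi=\psi_1\psi_2$ with $\psi_i\in\mathcal{S}$ used in Proposition 5, one has $p_{0,\psi}(fg)\leq C\,p_{0,\psi_1}(f)\,p_{0,\psi_2}(g)$, and the right-hand seminorms restrict to continuous seminorms on $\oc$; more generally, any seminorm pulled back from $\om$ is useless here because the multiplication is continuous into $\om$. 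Your test families are equally powerless: translated or amplitude-scaled bumps $\lambda_j\chi(\cdot-a_j)$, and the Proposition-2-style rescaling $\chi(cx)$, all live in $\mathcal{D}\subset\mathcal{D}_{L^\infty}$, where multiplication is jointly continuous into $\mathcal{D}_{L^\infty}\hookrightarrow\oc$ with the same homogeneity in $c$ (and in $\lambda_j$) on both sides; multiplication of functions does not raise the order of derivatives, so the delta-derivative/scaling mismatch that drives Proposition 2 (and Remark 4) has no analogue here. Your own auxiliary claim that every continuous seminorm on $\oc$ is bounded below by a fixed constant on translates of a bump is also false ($p_{0,\psi}(\chi(\cdot-a))\to 0$ faster than any polynomial as $|a|\to\infty$). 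In short, the obstacle you yourself flag --- controlling an arbitrary continuous seminorm on the inductive limit --- is never overcome.

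The paper's proof is structurally different and avoids exhibiting a witnessing seminorm altogether. It works with the neighbourhood base $V_{\mathbf{k}}$ of the inductive limit $\oc=\lim_{j\rightarrow}(1+|x|^2)^j\mathcal{D}_{L^\infty}$: assuming continuity, $V_{\mathbf{m}}\cdot V_{\mathbf{m}}\subset V_{\mathbf{k}}$ forces the inclusions $(1+|x|^2)^{1+j}\varepsilon U_l\subset V_{\mathbf{k}}$ with $l$ \emph{independent of} $j$; this uniformity is then used to show that $f_r(x)=\mathrm{e}^{\mathrm{i}|x|^2}\varphi(x/r)$ is a Cauchy sequence in $\oc$, whence by completeness of $\oc$ its limit $\mathrm{e}^{\mathrm{i}|x|^2}$ (identified via the continuous embedding into $\mathcal{D}'$) would belong to $\oc$, a contradiction. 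If you want to salvage a direct approach you would need a seminorm genuinely tied to the inductive-limit structure (e.g.\ a Minkowski functional of some $V_{\mathbf{k}}$), at which point you are essentially reproducing the paper's neighbourhood argument; as it stands, your proposal has a genuine gap at its central step.
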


\begin{proof}
By Fourier transform the discontinuity of these two mappings is equivalent. We will prove the discontinuity of the multiplication $\oc\times\oc\stackrel{\cdot}{\rightarrow}\oc$. 
\par
Since the topology of $\oc$ is defined by the inductive limit $\lim_{j \rightarrow}{(1+|x|^2)^{j}}\mathcal{D}_{L^\infty}$, a base of neighbourhoods of 0 in $\oc$ is $$\{V_{\mathbf{k}}\,;\, \mathbf{k}=(k_j)_{j\in\N}\in\N^{\N}\},$$ where $V_\mathbf{k}$ is the absolutely convex envelope of the union $\bigcup_{j\in \N}(1+|x|^2)^{j}U_{k_j}$ with $$U_l=\left\{ f\in\mathcal{D}_{L^\infty}\,;\, \sup_{|\alpha|\leq l}\Vert \partial^\alpha f\Vert_\infty\leq \frac1l \right\},$$ cf. \cite[Cor., p.~79]{RR}. \par
We assume that the mapping in question is continuous. Then for every $\mathbf{k}\in\N^{\N}$ there exists $\mathbf{m}\in\N^{\N}$ with $V_{\mathbf{m}}\cdot V_{\mathbf{m}}\subset V_{\mathbf{k}}$ and in particular $V_{\mathbf{m}}\cdot (1+|x|^2)U_{m_1}\subset V_{\mathbf{k}}$. Since the function $x\mapsto \tfrac1l\,$ is in $U_l$, $l\in\N$, this inclusion yields $(1+|x|^2)^{j+1}\frac{1}{m_j}U_{m_1}\subset V_{\mathbf{k}}$ for all $j\in\N$. Thus we have 
$$\forall \mathbf{k}\,\exists l\in\N\,\forall j\in\N\,\exists\varepsilon>0\colon (1+|x|^2)^{1+j}\varepsilon U_l\subset V_{\mathbf{k}}.$$ 
\par
Let now $\varphi\in\mathcal{D}$ with $\varphi=1$ in a neighbourhood of 0 and $f_r(x):= \text{e}^{\text{i}|x|^2}\varphi(x/r)\in\mathcal{D}\subset\oc$ for $r\in\N$ and $x\in\R^n$. We show that the sequence $(f_r)_{r\in\N}$ is a Cauchy sequence in $\oc$ and, thus, also convergent, since $\oc$ is complete \cite[Chap. 2, Th. 16, p. 131]{GRO}. But this sequence converges to  $\rm e^{\rm i|x|^2}$ in $\mathcal{D}'$, and since the embedding $\oc\hookrightarrow \mathcal{D}'$ is continuous and $\rm e^{\rm i|x|^2}\notin \oc$ this will complete the proof. \par
To prove that $(f_r)_r$ is a Cauchy sequence we have to show that 
$$\forall\mathbf{k}\,\exists N\in\N\,\forall r,s\geq N\colon f_r-f_s\in V_{\mathbf{k}}$$
 or by our previous considerations it suffices to show
\begin{multline}
\forall l\in\N\,\exists j\in\N\,\forall\varepsilon>0\,\exists N\in N \,\forall r,s\geq N \colon
f_r-f_s\in(1+|x|^2)^{j+1}\varepsilon U_l\,, \\
i.e., \sup_{|\alpha|\leq l}\left|\left|\partial^\alpha \left((f_r(x)-f_s(x))(1+|x|^2)^{-j-1}\right)\right|\right|_\infty\leq \frac{\varepsilon}{l}.\end
{multline}
We choose $j=l$ and get 
\begin{multline}
\partial^\alpha \left(f_r(x)(1+|x|^2)^{-j-1}\right)=\\
\sum_{\beta \leq \alpha}\sum_{\gamma\leq \beta} \binom{\alpha}{\beta}\binom{\beta}{\gamma}\partial^\gamma \rm e^{\rm i|x|^2}\partial^{\beta-\gamma}(\varphi(x/r))\partial^{\alpha-\beta}(1+|x|^2)^{-l-1}=\\
\rm e^{\rm i|x|^2}\sum_{\beta \leq \alpha,\gamma\leq \beta}\frac{P_{\beta,\gamma}(x)}{(1+|x|^2)^{l+|\alpha-\beta|}}\cdot\frac{\partial^{\beta-\gamma}(\varphi(x/r))}{1+|x|^2},
\end{multline} where $P_{\beta,\gamma}$ is a polynomial of degree less than or equal to $|\gamma|+|\alpha-\beta|$. The function $P_{\beta,\gamma}(x)(1+|x|^2)^{-l-|\alpha-\beta|}$ is in $L^\infty$. Furthermore for every $\alpha\in\N^{n}_{0}$ we have 
$$\forall \varepsilon\,\exists N\in\N \,\forall r,s\geq N\colon \Vert \partial^\alpha(\varphi(x/r)-\varphi(x/s))(1+|x|^2)^{-1}  \Vert_\infty<\varepsilon,$$
 which implies (1).
\end{proof} 

\begin{proposition}
The mappings $\om\times\om\stackrel{\cdot}{\rightarrow}\om$ and $\oc'\times\oc'\stackrel{\ast}{\rightarrow}\oc'$ are continuous, i.e. $(\om,\cdot)$ and $(\oc',\ast)$ are even topological algebras.\end{proposition}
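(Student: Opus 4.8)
The plan is to establish the continuity of the multiplication $\om\times\om\stackrel{\cdot}{\rightarrow}\om$ by a direct seminorm estimate, and then to deduce the continuity of $\oc'\times\oc'\stackrel{\ast}{\rightarrow}\oc'$ from it via the Fourier transform. For the deduction, recall from the discussion after Lemma~1 that (by Th.~XV in \cite{SCHW} together with Fourier inversion) $\mathcal{F}$ maps $\oc'$ topologically isomorphically onto $\om$, and that the Exchange Formula $\mathcal{F}(S\ast T)=\mathcal{F}S\cdot\mathcal{F}T$ holds on $\oc'\times\oc'$. Hence $S\ast T=\mathcal{F}^{-1}\big((\mathcal{F}S)\cdot(\mathcal{F}T)\big)$ for $(S,T)\in\oc'\times\oc'$, so $\ast$ is the composition of $\mathcal{F}\times\mathcal{F}\colon\oc'\times\oc'\rightarrow\om\times\om$, the multiplication $\om\times\om\rightarrow\om$, and $\mathcal{F}^{-1}\colon\om\rightarrow\oc'$; all three being continuous, so is $\ast$, and both algebras become topological algebras.

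For the multiplication I would use the norms $p_{m,\psi}(f)=\sup_{|\alpha|\le m}\Vert\psi\cdot\partial^\alpha f\Vert_\infty$ ($m\in\N_0$, $\psi\in\mathcal{S}$) defining the topology of $\om$ and the bilinear continuity criterion recalled before Proposition~1. Fix a target norm $p_{m,\psi}$. By the Leibniz rule and $\sum_{\beta\le\alpha}\binom{\alpha}{\beta}=2^{|\alpha|}$,
\[
p_{m,\psi}(fg)\le\sup_{|\alpha|\le m}\;\sum_{\beta\le\alpha}\binom{\alpha}{\beta}\,\Vert\psi\,\partial^\beta f\,\partial^{\alpha-\beta}g\Vert_\infty\le 2^m\max_{|\alpha|\le m,\;\beta\le\alpha}\Vert\psi\,\partial^\beta f\,\partial^{\alpha-\beta}g\Vert_\infty .
\]
So it suffices to bound each $\Vert\psi\,\partial^\beta f\,\partial^{\alpha-\beta}g\Vert_\infty$ by a product of a norm of $f$ with a norm of $g$. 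I claim this follows once we exhibit $\rho\in\mathcal{S}$ with $\rho\ge0$ and $|\psi(x)|\le\rho(x)^2$ for all $x\in\R^n$: then $|\psi\,\partial^\beta f\,\partial^{\alpha-\beta}g|\le|\rho\,\partial^\beta f|\cdot|\rho\,\partial^{\alpha-\beta}g|$ pointwise, whence (as $|\beta|,|\alpha-\beta|\le m$) $\Vert\psi\,\partial^\beta f\,\partial^{\alpha-\beta}g\Vert_\infty\le p_{m,\rho}(f)\,p_{m,\rho}(g)$, and altogether $p_{m,\psi}(fg)\le p_{m,2^m\rho}(f)\,p_{m,\rho}(g)$ with $2^m\rho\in\mathcal{S}$, which is exactly the inequality required by the criterion.

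Thus everything reduces to constructing, for a given $\psi\in\mathcal{S}$, a function $\rho\in\mathcal{S}$, $\rho\ge0$, dominating $\sqrt{|\psi|}$ pointwise; this is the main point, since $\sqrt{|\psi|}$ itself is not smooth and a mollification of it is never a pointwise majorant. I would first pass to the non-increasing majorant $\sigma(t):=\sup_{|x|^2\ge t}\sqrt{|\psi(x)|}$ on $[0,\infty)$, which satisfies $\sigma(|x|^2)\ge\sqrt{|\psi(x)|}$ and, because $\psi\in\mathcal{S}$, $\sigma(t)\le a_k(1+t)^{-k}$ for suitable constants $a_k$ and every $k\in\N_0$. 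Extending $\sigma$ by the value $\sigma(0)$ on $(-\infty,0)$ and smoothing, $g:=\big(\sigma(\cdot-2)\big)\ast\varphi$ with $\varphi\in\mathcal{D}(\R)$, $\varphi\ge0$, $\int\varphi=1$, $\supp\varphi\subset(-1,1)$, one obtains $g\in\mathcal{C}^\infty(\R)$ which is still non-increasing, still satisfies $g(t)\ge\sigma(t-1)\ge\sigma(t)$ by monotonicity of $\sigma$, and whose derivatives $g^{(j)}=\sigma(\cdot-2)\ast\varphi^{(j)}$ obey $|g^{(j)}(t)|\le\Vert\varphi^{(j)}\Vert_{L^1}\,\sigma(t-3)$ for $t\ge4$, hence tend to $0$ faster than any power. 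Then $\rho(x):=g(|x|^2)$ is $\mathcal{C}^\infty$ on $\R^n$, each derivative being a sum of terms $g^{(j)}(|x|^2)$ times a polynomial in $x$, hence rapidly decreasing together with all its derivatives, so $\rho\in\mathcal{S}$; and $\rho\ge0$ with $\rho(x)^2=g(|x|^2)^2\ge\sigma(|x|^2)^2\ge|\psi(x)|$, as wanted.

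The obstacle is exactly this construction of $\rho$: the Leibniz estimate and the Fourier reduction are routine given the results already quoted, whereas the point above is to take a monotone majorant of $\sqrt{|\psi|}$ first and smooth \emph{that}, so as to retain simultaneously the smoothness (no square roots of mollified functions) and the rapid decay of every derivative.
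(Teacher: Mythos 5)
Your proof is correct, and its overall strategy coincides with the paper's: reduce the convolution statement to the multiplication $\om\times\om\stackrel{\cdot}{\rightarrow}\om$ via the isomorphism $\mathcal{F}\colon\om\rightarrow\oc'$ (plus inversion) and the exchange formula on $\oc'\times\oc'$, then verify the bilinear seminorm criterion for the norms $p_{m,\psi}$ by the Leibniz rule. Where you genuinely diverge is in the key lemma that handles the weight $\psi$: the paper factorizes $\psi$ \emph{exactly} as a product $\psi=\psi_1\cdot\psi_2$ of two $\mathcal{S}$--functions, invoking the result (Lemma 2 in \cite{MIY}, transferred by Fourier transform from the convolution-factorization statement) and then distributing $\psi_1$ to $f$ and $\psi_2$ to $g$ in the Leibniz sum, arriving at $p_{k,\psi}(fg)\leq C(k)\,p_{k,\psi_1}(f)\,p_{k,\psi_2}(g)$. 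You observe instead that an exact factorization is not needed---a pointwise domination $|\psi|\leq\rho^2$ with $0\leq\rho\in\mathcal{S}$ suffices---and you build such a $\rho$ by hand: pass to the non-increasing radial majorant $\sigma(t)=\sup_{|x|^2\geq t}\sqrt{|\psi(x)|}$, shift and mollify so that monotonicity guarantees both the pointwise bound $g(t)\geq\sigma(t-1)\geq\sigma(t)$ and the decay $|g^{(j)}(t)|\leq\Vert\varphi^{(j)}\Vert_{L^1}\,\sigma(t-3)$, and set $\rho(x)=g(|x|^2)$; these estimates are exactly what is needed, and the resulting bound $p_{m,\psi}(fg)\leq 2^{m}p_{m,\rho}(f)\,p_{m,\rho}(g)$ meets the continuity criterion. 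The trade-off is clear: the paper's argument is shorter but rests on a non-trivial cited factorization theorem for $\mathcal{S}$, while yours is elementary and self-contained at the cost of the explicit majorant construction; both deliver the same kind of product estimate, and your Fourier-transform reduction for $\oc'\times\oc'\stackrel{\ast}{\rightarrow}\oc'$ is the same one-line argument the paper compresses into ``by Fourier transform the continuity of the two mappings is equivalent.''
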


\begin{proof}
By Fourier transform the continuity of the two mappings is equivalent. We will prove the continuity of the multiplication $\om\times\om\stackrel{\cdot}{\rightarrow}\om$. \par
For a given seminorm $p_{k,\psi}$---defined by a $k\in\N_0$ and an $\mathcal{S}$--function $\psi$---we search seminorms $p_{l,\psi_1}$ and $p_{m,\psi_2}$ on $\om$ with $p_{k,\psi}(fg)\leq p_{l,\psi_1}(f) p_{m,\psi_2}(g)$ for all $f,g\in\om$. But since every $\mathcal{S}$--function can be written as the convolution of two $\mathcal{S}$--functions (see Lemma 2 in \cite[pp.~529f]{MIY}) and via Fourier transformation this is equivalent to the fact that every $\mathcal{S}$--function is the product of two $\mathcal{S}$--functions, we can choose $\psi_1,\psi_2\in\mathcal{S}$ such that $\psi=\psi_1\cdot\psi_2$. Using the product formula we have for $f,g\in\om$ $$p_{k,\psi}(fg)=\sup_{\alpha\leq k}\|\psi\partial^\alpha (f\cdot g)\|_\infty\leq C(k) \sup_{\alpha\leq k}\|\psi_1\partial^\alpha f\|_\infty\cdot \sup_{\beta\leq k}\|\psi_2\partial^\beta g\|_\infty.$$
\end{proof} 

\begin{remark} The assertion of Proposition 5 is also stated in \cite[p.~248]{SCHW}.
The continuity of the convolutions $\oc'\times\oc'\stackrel{\ast}{\rightarrow} \oc'$ and $\mathcal{E'}\times\mathcal{E'}\stackrel{\ast}{\rightarrow}\mathcal{E'}$ is also stated, but not proved, in Example 5 \cite[pp.~211f]{SHI63}. Note that this Example 5 contains a mistake. Namely, it is stated that the convolution $\mathcal{D}\times\mathcal{E}\stackrel{\ast}{\rightarrow}\mathcal{E}$ is continuous, which is not the case (cf. \cite[Prop.~44, p.~70]{BARDISS}): For given compact sets $K$ and $\tilde{K}$ simply take $0\neq f\in\mathcal{E}$ with $f|_{K}=0$ and $\varphi\in\mathcal{D}$ with $(f\ast\varphi)|_{\tilde{K}}\neq 0$. Hence $p_{l,\tilde{K}}(f\varphi)>0=p_{m,K}(f)p(\varphi)$ for any continuous norm $p$ on $\mathcal{D}$ and $l,m\in\N_0$.
\end{remark}
\begin{proposition}
Let $1\leq q\leq\infty$. The multiplications $\mathcal{E'}\times\mathcal{E}\stackrel{\cdot}{\rightarrow}\mathcal{E'}$, $\om'\times\om\stackrel{\cdot}{\rightarrow}\om'$, $\oc'\times\oc\stackrel{\cdot}{\rightarrow}\oc'$, $\mathcal{D}'_{L^q}\times\mathcal{D}_{L^\infty}\stackrel{\cdot}{\rightarrow}\mathcal{D}'_{L^q}$, $\mathcal{S'}\times\om\stackrel{\cdot}{\rightarrow}\mathcal{S'}$ and $\mathcal{D}'\times\mathcal{E}\stackrel{\cdot}{\rightarrow}\mathcal{D}'$ and the convolution $\mathcal{S'}\times\oc'\stackrel{\ast}{\rightarrow}\mathcal{S'}$ are discontinuous.
\end{proposition}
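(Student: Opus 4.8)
The plan is to establish all seven discontinuities by a transference argument, reducing everything to a single "master" discontinuity together with the duality/Fourier-transform isomorphisms already recorded in the excerpt. First I would observe that the mappings split into three Fourier-conjugate families: the convolution $\mathcal{S}'\times\oc'\stackrel{\ast}{\rightarrow}\mathcal{S}'$ is carried by $\mathcal F$ to the multiplication $\mathcal{S}'\times\om\stackrel{\cdot}{\rightarrow}\mathcal{S}'$ (using $\mathcal F\colon\mathcal{S}'\to\mathcal{S}'$, $\mathcal F\colon\oc'\to\om$ and the Convolution Theorem on $\oc'\times\mathcal{S}'$ from Lemma~1), and likewise $\om'\times\om\stackrel{\cdot}{\rightarrow}\om'$ and $\oc'\times\oc\stackrel{\cdot}{\rightarrow}\oc'$ are Fourier-conjugate to convolutions $\oc\times\oc'\stackrel{\ast}{\rightarrow}\oc$ and $\om\times\om'\stackrel{\ast}{\rightarrow}\om$, whose discontinuity is already furnished by Propositions~2 and~4. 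That already disposes of three of the seven arrows, so the real content is the chain $\mathcal{D}'\times\mathcal{E}\stackrel{\cdot}{\rightarrow}\mathcal{D}'$, $\mathcal{S}'\times\om\stackrel{\cdot}{\rightarrow}\mathcal{S}'$, $\mathcal{D}'_{L^q}\times\mathcal{D}_{L^\infty}\stackrel{\cdot}{\rightarrow}\mathcal{D}'_{L^q}$, $\oc'\times\oc\stackrel{\cdot}{\rightarrow}\oc'$, $\om'\times\om\stackrel{\cdot}{\rightarrow}\om'$, $\mathcal{E}'\times\mathcal{E}\stackrel{\cdot}{\rightarrow}\mathcal{E}'$.

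Next I would exploit the "stay inside a small space" trick used already in Proposition~2: the multiplication $\mathcal{E}'\times\mathcal{E}\stackrel{\cdot}{\rightarrow}\mathcal{E}'$ is continuously embedded, domain and codomain, into each of the other five multiplications in the list, because $\mathcal{E}'\hookrightarrow\om'\hookrightarrow\oc'\hookrightarrow\mathcal{D}'_{L^q}\hookrightarrow\mathcal{S}'\hookrightarrow\mathcal{D}'$ and $\mathcal{D}_{L^\infty},\om,\oc,\mathcal{E}$ all contain $\mathcal{D}$ (indeed $\mathcal E$ contains all of them), so that continuity of the larger multiplication would restrict to continuity of $\mathcal{E}'\times\mathcal{E}\stackrel{\cdot}{\rightarrow}\mathcal{E}'$—wait, one must be slightly careful that the target embeddings go the right way, so I would instead embed $\mathcal D\times\mathcal E$ into each domain and each target into $\mathcal D'$ and reduce to the discontinuity of $\mathcal D\times\mathcal E\stackrel{\cdot}{\rightarrow}\mathcal D'$, exactly in the spirit of the unified argument in Proposition~2. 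So it suffices to prove: $\mathcal{D}\times\mathcal{E}\stackrel{\cdot}{\rightarrow}\mathcal{D}'$ is discontinuous.

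For that master statement I would argue by contradiction using seminorms. Assume it is continuous. Since $\varphi\mapsto|\langle\varphi,T_0\rangle|$ for a fixed $T_0\in\mathcal D'$ is a continuous seminorm on $\mathcal D'$ (here taking $T_0$ to be a suitable distribution supported near the origin, e.g. $\delta$ or a derivative of $\delta$), continuity gives, for the fixed seminorm $p_{\boldsymbol m,\boldsymbol\varepsilon}$ on $\mathcal D$ and some bounded $B\subset\mathcal D$, an estimate $|\langle\varphi f,T_0\rangle|\le p_{\boldsymbol m,\boldsymbol\varepsilon}(\varphi)\,\sup_{\psi\in B}|\langle\psi,f\rangle|$ for all $(\varphi,f)\in\mathcal D\times\mathcal E$—but the right-hand factor involving $f$ is a continuous seminorm on $\mathcal E$, which, crucially, vanishes on every $f\in\mathcal E$ that vanishes on the (compact) union of supports of members of $B$. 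Choosing $f\in\mathcal E$ with $f\equiv 0$ on that compact set but $f$ not identically zero near $\supp T_0$, and choosing $\varphi\in\mathcal D$ with $\langle\varphi f,T_0\rangle\ne 0$, produces $0<|\langle\varphi f,T_0\rangle|\le p_{\boldsymbol m,\boldsymbol\varepsilon}(\varphi)\cdot 0=0$, the desired contradiction. (Alternatively one can run the same localization argument against a point mass and scale $\varphi(x)\mapsto\varphi(cx)$ as in Proposition~2; either route works.)

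The main obstacle I anticipate is bookkeeping rather than mathematics: making sure the reduction embeddings are literally continuous with the topologies as defined in Section~3.1 (in particular that $\mathcal D\hookrightarrow\mathcal{D}'_{L^q}$, $\mathcal E\hookrightarrow\mathcal{D}'$ etc. are continuous, and that a continuous seminorm on $\mathcal D'_{L^q}$ or $\mathcal S'$ restricts to a continuous seminorm on $\mathcal D'$ when precomposed with the embedding), and choosing the fixed distribution $T_0$ in a space small enough to live in all the relevant duals ($\mathcal E'$ works, since $\delta$ and its derivatives lie in $\mathcal E'\subset\om'\subset\cdots$). Once the diagram chase is set up correctly, the contradiction itself is the short seminorm computation sketched above, essentially identical to the proof of Proposition~1.
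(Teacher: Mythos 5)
Your reduction to a single ``master'' discontinuity is the right instinct (it is exactly what the paper does), but your choice of master map is wrong, and this breaks the proof in two places. First, the bookkeeping: to transfer discontinuity you must embed a space into \emph{every} domain, so you need the \emph{smallest} second factor, which among $\mathcal{E},\om,\oc,\mathcal{D}_{L^\infty}$ is $\mathcal{D}_{L^\infty}$, not $\mathcal{E}$. Since $\mathcal{E}\not\subset\om$ and $\mathcal{E}\not\subset\mathcal{D}_{L^\infty}$, the pair $\mathcal{D}\times\mathcal{E}$ does not embed into the domains of $\mathcal{S'}\times\om\stackrel{\cdot}{\rightarrow}\mathcal{S'}$ or $\mathcal{D}'_{L^q}\times\mathcal{D}_{L^\infty}\stackrel{\cdot}{\rightarrow}\mathcal{D}'_{L^q}$, so even if your master claim held, these cases (and with them the convolution $\mathcal{S'}\times\oc'\stackrel{\ast}{\rightarrow}\mathcal{S'}$, which you reduced to $\mathcal{S'}\times\om$) would not follow. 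The paper takes $\mathcal{E'}\times\mathcal{D}_{L^\infty}\stackrel{\cdot}{\rightarrow}\mathcal{D}'$ as the master map, precisely because $\mathcal{E'}$ is the smallest first factor and $\mathcal{D}_{L^\infty}$ the smallest second factor.

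Second, and more seriously, your master statement is false: the multiplication $\mathcal{D}\times\mathcal{E}\stackrel{\cdot}{\rightarrow}\mathcal{D}'$ \emph{is} jointly continuous. If $B\subset\mathcal{D}$ is bounded, all $\psi\in B$ have support in a fixed compact $K_0$ and are uniformly bounded, so $p_B(\varphi f)=\sup_{\psi\in B}\bigl|\int\psi\varphi f\,\dx x\bigr|\leq C\,\Vert\varphi\Vert_\infty\,p_{0,K_0}(f)$, a product of continuous seminorms on $\mathcal{D}$ and $\mathcal{E}$; no derivatives of the product are seen by the target seminorms. The flaw in your sketched contradiction is the claim that $S\mapsto|\langle S,T_0\rangle|$ for a fixed $T_0\in\mathcal{D}'$ (e.g.\ a derivative of $\delta$) is a continuous seminorm on $\mathcal{D}'$: it is not (it is not even defined on all of $\mathcal{D}'$); the continuous seminorms on $\mathcal{D}'$ are suprema of evaluations on bounded subsets of the predual $\mathcal{D}$, so you have the duality backwards. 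The derivative of $\delta$ must instead sit in the \emph{domain's} distribution factor, where its dual-type seminorm $p_B$ can be kept small: the paper takes $T=\partial^\gamma\delta\in\mathcal{E'}$ with $|\gamma|=m+1$, $f(x)=\mathrm{e}^{\mathrm{i}cx_1}\in\mathcal{D}_{L^\infty}$ with $c=p_B(T)+1$, and the target seminorm $T\mapsto|\langle\varphi,T\rangle|$ with $\varphi\in\mathcal{D}$, $\varphi=1$ near $0$, so that $|\langle\varphi,fT\rangle|=c^{m+1}>p_B(T)\,c^m=p_B(T)p_m(f)$. Your Fourier-transform reductions of $\om'\times\om$ and $\oc'\times\oc$ to the regularizations of Proposition~2 are correct (the paper notes this equivalence), but the remaining five mappings are left without a valid proof.
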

\begin{proof}
The discontinuity of the mappings $\mathcal{S'}\times\om\stackrel{\cdot}{\rightarrow}\mathcal{S'}$ and $\mathcal{S'}\times\oc'\stackrel{\ast}{\rightarrow}\mathcal{S'}$ is equivalent by Fourier transform. To proof the discontinuity of the multiplications in question all at once, we take the largest target space $\mathcal{D}'$ and as domain we take $\mathcal{E'}\times\mathcal{D}_{L^\infty}$. Since $\mathcal{E'}\times \mathcal{D}_{L^\infty}$ is continuously embedded in the domains of the considered multiplications, the discontinuity of these multiplications is proved if we show that the mapping $\mathcal{E'}\times\mathcal{D}_{L^\infty}\stackrel{\cdot}{\rightarrow}\mathcal{D}'$ is discontinuous. \par
We take $\varphi\in\mathcal{D}$ with $\varphi=1$ in a neighbourhood of 0. Then $T\mapsto |\langle \varphi,T\rangle |$ is a continuous seminorm on $\mathcal{D}'$. Let now $m\in\N_0$, defining the seminorm $p_{m}$ on $\mathcal{D}_{L^\infty}$, and $B$ be a bounded subset of $\mathcal{E}$, defining the seminorm $p_B$ on $\mathcal{E'}$. We set $\gamma=(m+1,0\ldots,0)$, $T=\partial^\gamma\delta\in\mathcal{E'}$ and $c=p_B(T)+1$. Furthermore we define $f(x)=\rm e^{\rm i c x_1}$ for $x=(x_1,\ldots,x_n)\in\R^n$. Then $f\in\mathcal{D}_{L^\infty}$ and $|\langle \varphi,fT \rangle|=|\partial^\gamma f (0)|=c^{m+1}$. Combining this with $p_{m}(f)=\sup_{|\alpha|\leq m}\Vert \partial^\alpha f\Vert_\infty=  c^m$ yields
\begin{gather*}|\langle \varphi,fT \rangle|=c^{m+1}>p_B(T) c^m= p_B(T)p_{m}(f).\end{gather*} 
\end{proof}

\begin{remark} The discontinuity of the multiplications $\mathcal{D}'\times\mathcal{E}\stackrel{\cdot}{\rightarrow}\mathcal{D}'$ and $\mathcal{S'}\times\om\stackrel{\cdot}{\rightarrow}\mathcal{S'}$ is the content of \cite{KM81}, where the discontinuity of the two mappings is proved separately.
\end{remark}

\begin{proposition}
The mapping $\mathcal{D}'\times\mathcal{E'}\stackrel{\ast}{\rightarrow}\mathcal{D}'$ 
is discontinuous. (This is already stated by L. Schwartz in \cite[p.~158]{SCHW})
\end{proposition}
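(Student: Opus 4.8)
The plan is to show that the convolution $\mathcal{D}'\times\mathcal{E}'\stackrel{\ast}{\rightarrow}\mathcal{D}'$ fails to be jointly continuous by exhibiting a single continuous seminorm on the target $\mathcal{D}'$ for which no product of continuous seminorms on $\mathcal{D}'$ and $\mathcal{E}'$ can dominate it. As in the proof of Proposition~6, I would pick a test function $\varphi\in\mathcal{D}$ with $\varphi=1$ in a neighbourhood of $0$, so that $S\mapsto|\langle\varphi,S\rangle|$ is a continuous seminorm on $\mathcal{D}'$; the game is then to choose, for an arbitrary continuous seminorm $p$ on $\mathcal{D}'$ and an arbitrary bounded set $B\subset\mathcal{E}$ (defining $p_B$ on $\mathcal{E}'$), a pair $(S,T)\in\mathcal{D}'\times\mathcal{E}'$ making $|\langle\varphi,S\ast T\rangle|$ arbitrarily large compared with $p(S)p_B(T)$.

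The key idea is that a continuous seminorm $p$ on $\mathcal{D}'$ is dominated by $\sup$ over evaluations against a fixed bounded subset $A$ of $\mathcal{D}$; such an $A$ lives inside some $\mathcal{D}_K$ for a fixed compact $K$ and satisfies a fixed derivative bound. Likewise the bounded set $B\subset\mathcal{E}$ only controls derivatives up to a fixed order on a fixed compact set. So both seminorms are ``blind'' beyond a fixed differentiation order, whereas convolution with $\partial^\gamma\delta$ for large $|\gamma|$ differentiates. Concretely I would take $T=\partial^\gamma\delta\in\mathcal{E}'$ with $|\gamma|$ large, and for $S$ a translate (or a family of translates) of a fixed distribution — using that translation moves mass away from $K$, so $p(S)$ stays bounded, while $\langle\varphi,S\ast T\rangle=\langle\varphi,\partial^\gamma S\rangle$ can be arranged to be of a fixed nonzero size. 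Actually the cleanest route mirrors Proposition~6 exactly: fix $S=\partial^\gamma\delta$ and play with a function factor, but here since both factors are distributions we instead scale the test-function side — take $S_r$ a regularized, rescaled bump as in Proposition~2's argument, or simply note $\langle\varphi,S\ast\partial^\gamma\delta\rangle=(-1)^{|\gamma|}\langle\partial^\gamma\varphi,S\rangle$ and choose $S$ a high-frequency plane wave restricted appropriately so that $p(S)$ is controlled by a lower-order quantity while $\langle\partial^\gamma\varphi,S\rangle$ grows.

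The main obstacle is that, unlike in Proposition~6 where the second factor was a genuine $\mathcal{C}^\infty$ function $e^{icx_1}$ and one could compute $\partial^\gamma f(0)$ explicitly, here both factors are distributions and $p(S)$ for $S\in\mathcal{D}'$ is a supremum over a bounded set in $\mathcal{D}$ rather than a clean derivative norm — so one must be careful that the chosen $S$ is not ``seen'' too strongly by $p$. I expect the resolution is to exploit the embedding structure: $\mathcal{E}'\times\mathcal{E}'$ is continuously included in $\mathcal{D}'\times\mathcal{E}'$ and $\mathcal{E}'\ast\mathcal{E}'\subset\mathcal{E}'\subset\mathcal{D}'$, but $\mathcal{E}'\times\mathcal{E}'\to\mathcal{E}'$ is continuous (Proposition~3), so that does not immediately give a contradiction; rather, one uses that $\mathcal{D}\times\mathcal{E}'$ embeds continuously into $\mathcal{D}'\times\mathcal{E}'$ and reduces, exactly as in Proposition~2, to showing $\mathcal{D}\times\mathcal{E}'\stackrel{\ast}{\rightarrow}\mathcal{D}'$ is discontinuous — then the argument of Proposition~2 (with the weaker target seminorm $S\mapsto|\langle\varphi,S\rangle|$ in place of $\sup_{|x|\le1}|\cdot|$, which is even easier) applies verbatim with the rescaling $\tilde f(x)=f(cx)$ and $T=\partial^\gamma\delta$, $|\gamma|=m_0+1$. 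So the proof reduces to citing Proposition~2's construction on the subspace $\mathcal{D}\times\mathcal{E}'$, checking only that the relevant inclusions into $\mathcal{D}'\times\mathcal{E}'$ and $\mathcal{D}'$ are continuous, which is standard.
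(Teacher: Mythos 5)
Your reduction at the end is to a false statement, so the proof cannot be repaired along these lines. The restricted convolution $\mathcal{D}\times\mathcal{E'}\stackrel{\ast}{\rightarrow}\mathcal{D}'$ is in fact \emph{continuous}: it factors as the composition of $\mathcal{D}\times\mathcal{E'}\hookrightarrow\mathcal{E'}\times\mathcal{E'}\stackrel{\ast}{\rightarrow}\mathcal{E'}$ (continuous by Proposition~3, since $\mathcal{D}\hookrightarrow\mathcal{E'}$ is continuous; this is exactly Ehrenpreis' map \underline{2} in Remark~5) with the continuous inclusion $\mathcal{E'}\hookrightarrow\mathcal{D}'$. This also explains why the claimed ``verbatim'' transfer of Proposition~2 fails: with the target seminorm $S\mapsto|\langle\varphi,S\rangle|$ one has $\langle\varphi,\tilde f\ast\partial^\gamma\delta\rangle=(-1)^{|\gamma|}\langle\partial^\gamma\varphi,\tilde f\rangle$, so the high-order derivatives land on the \emph{fixed} test function $\varphi$ rather than on $\tilde f$; the quantity is bounded by $\Vert\partial^\gamma\varphi\Vert_{L^1}\Vert\tilde f\Vert_\infty$ (and is even exactly $0$ once $\varphi\equiv1$ on $\supp\tilde f$), so the rescaling $\tilde f(x)=f(cx)$ produces no unbounded quotient. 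The mechanism behind Proposition~2 was that the target seminorm of $\mathcal{E}$ measures $\sup_{|x|\le1}$ of the convolution itself, which does see $\partial^\gamma\tilde f$; the weak pairing against a fixed $\varphi\in\mathcal{D}$ does not, and your heuristic (``both seminorms are blind beyond a fixed differentiation order'') therefore does not apply here.

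The actual obstruction is of a different nature: it comes from the first factor carrying the strong topology of $\mathcal{D}'$, whose seminorms are suprema over bounded subsets $B\subset\mathcal{D}$, and every such $B$ has its supports inside one compact set. The paper's proof exploits precisely this: fix $\varphi_0\in\mathcal{D}$ with $\varphi_0(0)\neq0$ and the target seminorm $T\mapsto|\langle\varphi_0,T\rangle|$; given seminorms $p_B$ (on $\mathcal{D}'$) and $p_{\tilde B}$ (on $\mathcal{E'}$), choose $x_1$ outside the common compact support of $B$, and take $T=\delta_{x_1}$, $S=\delta_{-x_1}$. Then $p_B(T)=0$ while $\langle\varphi_0,S\ast T\rangle=\varphi_0(0)\neq0$, so no estimate $|\langle\varphi_0,S\ast T\rangle|\le p_{\tilde B}(S)\,p_B(T)$ can hold. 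Note that both counterexample distributions lie in $\mathcal{E'}$; the point is not that one factor is a ``wild'' distribution, but that the seminorms of $\mathcal{D}'$ (unlike those of $\mathcal{E'}$) cannot detect point masses placed far away, while the target seminorm detects their convolution $\delta_0$. Your proposal is missing this translation/support idea entirely, and its concrete final step is invalid.
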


\begin{proof}
We take the seminorm $T\mapsto |T(\varphi_0)|=|\langle \varphi_0,T\rangle|$ on $\mathcal{D}'$ defined by a test function $\varphi_0$ with $\varphi_0(0)\neq 0$. For given seminorms $p_B$ on $\mathcal{D}'$ and $p_{\tilde{B}}$ on $\mathcal{E'}$, defined by bounded subsets $B$ of $\mathcal{D}$ and $\tilde{B}$ of $\mathcal{E}$, respectively, we search $(S,T)\in\mathcal{E'}\times\mathcal{D}'$ with $|\langle \varphi_0,S\ast T\rangle|>p_{\tilde{B}}(S)p_B(T)$. Since there is a compact subset on $\R^n$ that contains the support of every function in $B$, we can find $x_1$ with $\varphi(x_1)=0$ for every $\varphi\in B$. If we set $T=\delta_{x_1}$ and $S=\delta_{-x_1}$ we get $p_B(T)=\sup_{\varphi\in B}|T(\varphi)|=0$ and, thus 
$$|(S\ast T)(\varphi_0)|=|\varphi_0(0)|>0=p_{\tilde{B}}(S)\sup_{\varphi\in B}|T(\varphi)|=p_B(S)p_{\tilde{B}}(T).$$
\end{proof} 

Since we can take the seminorm $T\mapsto |\langle \varphi_0,T\rangle|$ of the weak topology $\sigma(\mathcal{D}',\mathcal{D})$ on $\mathcal{D}'$, even the mapping $\mathcal{D}'\times\mathcal{E'}\stackrel{\ast}{\rightarrow}(\mathcal{D}',\sigma)$ is discontinuous, if $(\mathcal{D}',\sigma)$ denotes the vector space $\mathcal{D}'$ equipped with the topology $\sigma$.
\par
\begin{remark} In \cite[pp.~133f]{EHR56} L. Ehrenpreis asserts that certain convolutions and multiplications (we number them consecutively \underline{1} - \underline{14}) on the spaces $\mathcal{D}$, $\mathcal{D}'$, $\mathcal{E}$ and $\mathcal{E'}$ are ``continuous bilinear maps''. In fact, only three of the ten listed convolutions are continuous, i.e. \underline{1} $\mathcal{D}\times\mathcal{D}\stackrel{\ast}{\rightarrow}\mathcal{D}$ (continuous by \cite{HSTH01}), \underline{2} $\mathcal{D}\times\mathcal{E'}\stackrel{\ast}{\rightarrow}\mathcal{E'}$ and \underline{3}~$\nolinebreak{\mathcal{E'}\times\mathcal{E'}\stackrel{\ast}{\rightarrow}\mathcal{E'}}$ (both continuous by Proposition 3, since the embedding $\mathcal{D}\subset\mathcal{E'}$ is continuous). The other seven convolutions are discontinuous: The mappings \underline{4} $\mathcal{D}\times\mathcal{E}\stackrel{\ast}{\rightarrow}\mathcal{E}$ and \underline{5}~$\nolinebreak{\mathcal{D}\times\mathcal{D}'\stackrel{\ast}{\rightarrow}\mathcal{E}}$ are discontinuous by Remark 3 and since $\mathcal{E}\hookrightarrow\mathcal{D}'$ is continuous. Furthermore the convolutions \underline{6} $\mathcal{D}\times\mathcal{E'}\stackrel{\ast}{\rightarrow}\mathcal{D}$ and \underline{7}~$\mathcal{E}\times\mathcal{E'}\stackrel{\ast}{\rightarrow}\mathcal{E}$ are discontinuous by Proposition~2 and \underline{8} $\mathcal{D}'\times\mathcal{E'}\stackrel{\ast}{\rightarrow}\mathcal{D}'$ is discontinuous by Proposition~7. 
That the mapping \underline{9} $\mathcal{D}'\times\mathcal{D}\stackrel{\ast}{\rightarrow}\mathcal{D}'$ is also discontinuous can be shown similar to the proof of Proposition 7: Let $0\neq \varphi_0\in\mathcal{D}$, $p$ be a continuous norm on $\mathcal{D}$ and $p_B$ a seminorm  on $\mathcal{D}'$, defined by a bounded subset $B$ of $\mathcal{D}$, such that the support of every $\varphi\in B$ is contained in a compact subset $K$ of $\R^n$. Let $x_0\notin K$, $T=\delta_{x_0}$ and $\varphi\in\mathcal{D}$ with $\int_{R^n} \varphi_0(x)\varphi(x+x_0)\dx x\neq 0$. Then $|\langle \varphi_0,\varphi\ast T \rangle|>0=p_B(T)p(\varphi)$. 
\par If we have $0\neq \varphi_0\in\mathcal{D}$, a seminorm $p$ on $\mathcal{E'}$ and a seminorm $p_{m,K}$ on $\mathcal{E}$ we choose $x_0\in\R^n$ and $f\in\mathcal{E}$ such that $f|_{K}=0$ and $\int_{R^n} \varphi_0(x)f(x+x_0)\dx x\neq 0$. We have $|\langle \varphi_0,f\ast T \rangle|>0=p_{m,K}(f)p(T)$ and, thus, the convolution \underline{10} $\mathcal{E}\times\mathcal{E'}\stackrel{\ast}{\rightarrow}\mathcal{D}'$ is also not continuous. \par
Out of the four multiplications listed by L. Ehrenpreis, only two are continuous. The mappings \underline{11} $\mathcal{D}\times\mathcal{D}\stackrel{\cdot}{\rightarrow}\mathcal{D}$ and \underline{12} $\mathcal{E}\times\mathcal{E}\stackrel{\cdot}{\rightarrow}\mathcal{E}$ are continuous by \cite{HSTH01} and Proposition~3, respectively. That the multiplication \underline{13} $\mathcal{D}'\times\mathcal{E}\stackrel{\cdot}{\rightarrow}\mathcal{D}'$ is discontinuous is contained in Proposition~6. That the multiplication \underline{14} $\mathcal{D}\times\mathcal{D}'\stackrel{\cdot}{\rightarrow}\mathcal{E'}$ is not continuous can be seen similar to the previous proofs: Take the seminorm $T\mapsto |\langle 1,T\rangle|$ on $\mathcal{E'}$ and let $p$ be a norm on $\mathcal{D}$ and $p_B$ a seminorm  on $\mathcal{D}'$, defined by a bounded subset $B$ of $\mathcal{D}$, such that the support of every $\varphi\in B$ is contained in a compact subset $K$ of $\R^n$. For $x_0\notin K$ we set $T=\delta_{x_0}$ and take $\varphi\in\mathcal{D}$ with $\varphi(x_0)\neq 0$. Hence $|\langle 1,\varphi T\rangle|=|\varphi(x_0)|>0=p_B(T)p(\varphi)$. In the end only five of the fourteen mappings are (jointly) continuous.
\end{remark}

\begin{remark}
Let us make an attempt to explain why all the multiplications, that are defined on the product of a function and a distribution space and take values in a distribution space, and all the regularizations, i.e. convolutions defined on the product of a function and a distribution space and taking values in a function space, are discontinuous. \par A seminorm on a function space merely measures the derivatives of a function up to a certain order, but for a $\mathcal{C}^\infty$--function $f$ and a distribution $T$, the distribution $f\cdot T$ and the function $f\ast T$ can inherit derivatives of $f$ of arbitrary order, since we can take $T$ an arbitrary derivative of the Dirac delta---which actually led us to the counterexamples in the concerning proofs. The fact that we cannot estimate derivatives of higher order with derivatives of lesser order hence implies discontinuity.

\end{remark}
Finally, we collect the topological algebras occurring in this paper. The algebras with continuous multiplication were $\mathcal{D}$, $\mathcal{S}$, $\bp$, $\mathcal{D}_{L^p}$ ($1\leq p\leq \infty$), $\om$ and $\mathcal{E}$. The algebras with continuous convolution were $\mathcal{D}$, $\mathcal{S}$, $\mathcal{D}_{L^1}$, $\mathcal{E'}$, $\oc'$ and $\mathcal{D}'_{L^1}$.
\\\par
{\textbf{Acknowledgment.} Let me thank Norbert Ortner and Peter Wagner for essential contributions to this paper and in particular Norbert Ortner for his continuous support. I also thank Christian Bargetz for his collaboration and fruitful discussions concerning the topic.}

\bibliographystyle{alpha}
\bibliography{../literatur}

\def\cprime{$'$}
\begin{thebibliography}{HSTH01}

\bibitem[Bar12]{BARDISS}
Christian Bargetz.
\newblock {\em Topological Tensor Products and the Convolution of Vector-Valued
  Distributions}.
\newblock PhD thesis, Universit{\"a}t Innsbruck, 2012.

\bibitem[BNS77]{BNS77}
Edward Beckenstein, Lawrence Narici, and Charles Suffel.
\newblock {\em Topological algebras}.
\newblock North-Holland Publishing Co., Amsterdam, 1977.
\newblock North-Holland Mathematics Studies, Vol. 24, Notas de Matem{\'a}tica,
  No. 60. [Mathematical Notes, No. 60].

\bibitem[Bou87]{BOU}
N.~Bourbaki.
\newblock {\em Topological vector spaces. {C}hapters 1--5}.
\newblock Elements of Mathematics (Berlin). Springer-Verlag, Berlin, 1987.
\newblock Translated from the French by H. G. Eggleston and S. Madan.

\bibitem[DS49]{DS49}
Jean Dieudonn{\'e} and Laurent Schwartz.
\newblock La dualit\'e dans les espaces {$\mathcal{F}$} et {$(\mathcal{LF})$}.
\newblock {\em Ann. Inst. Fourier Grenoble}, 1:61--101 (1950), 1949.

\bibitem[Ehr56]{EHR56}
Leon Ehrenpreis.
\newblock Analytic functions and the {F}ourier transform of distributions. {I}.
\newblock {\em Ann. of Math. (2)}, 63:129--159, 1956.

\bibitem[EKO10]{EKO10}
Abdellah El~Kinani and Mohamed Oudadess.
\newblock {\em Distribution theory and applications}, volume~9 of {\em Series
  on Concrete and Applicable Mathematics}.
\newblock World Scientific Publishing Co. Pte. Ltd., Hackensack, NJ, 2010.
\newblock With a preface by Anastasios Mallios.

\bibitem[Fra05]{FRA05}
Maria Fragoulopoulou.
\newblock {\em Topological algebras with involution}, volume 200 of {\em
  North-Holland Mathematics Studies}.
\newblock Elsevier Science B.V., Amsterdam, 2005.

\bibitem[Gro54]{GRO54}
Alexandre Grothendieck.
\newblock Sur les espaces ({$F$}) et ({$DF$}).
\newblock {\em Summa Brasil. Math.}, 3:57--123, 1954.

\bibitem[Gro55]{GRO}
Alexander Grothendieck.
\newblock Produits tensoriels topologiques et espaces nucl\'eaires.
\newblock {\em Mem. Amer. Math. Soc.}, 16, 1955.

\bibitem[GV92]{GV92}
S.~G. Gindikin and L.~R. Volevich.
\newblock {\em Distributions and convolution equations}.
\newblock Gordon and Breach Science Publishers, Philadelphia, PA, 1992.
\newblock Translated from the Russian by V. M. Volosov.

\bibitem[Hor66]{HOR}
John Horv{\'a}th.
\newblock {\em Topological vector spaces and distributions. {V}ol. {I}}.
\newblock Addison--Wesley Publishing Co., Reading, Mass.--London--Don Mills,
  Ont., 1966.

\bibitem[HSTH01]{HSTH01}
Takeshi Hirai, Hiroaki Shimomura, Nobuhiko Tatsuuma, and Etsuko Hirai.
\newblock Inductive limits of topologies, their direct products, and problems
  related to algebraic structures.
\newblock {\em J. Math. Kyoto Univ.}, 41(3):475--505, 2001.

\bibitem[Jar81]{JAR}
Hans Jarchow.
\newblock {\em Locally convex spaces}.
\newblock B. G. Teubner, Stuttgart, 1981.
\newblock Mathematische Leitf{\"a}den. [Mathematical Textbooks].

\bibitem[KM81]{KM81}
Jan Ku{\v{c}}era and Kelly McKennon.
\newblock Continuity of multiplication of distributors.
\newblock {\em Internat. J. Math. Math. Sci.}, 4(4):819--822, 1981.

\bibitem[Mal86]{MAL86}
Anastasios Mallios.
\newblock {\em Topological algebras. {S}elected topics}, volume 124 of {\em
  North-Holland Mathematics Studies}.
\newblock North-Holland Publishing Co., Amsterdam, 1986.
\newblock Notas de Matem{\'a}tica [Mathematical Notes], 109.

\bibitem[Miy60]{MIY}
Kenichi Miyazaki.
\newblock Distinguished elements in a space of distributions.
\newblock {\em J. Sci. Hiroshima Univ. Ser. A}, 24:527--533, 1960.

\bibitem[Ort10]{ORT10}
Norbert Ortner.
\newblock On convolvability conditions for distributions.
\newblock {\em Monatsh. Math.}, 160(3):313--335, 2010.

\bibitem[RR64]{RR}
A.~P. Robertson and W.~J. Robertson.
\newblock {\em Topological vector spaces}.
\newblock Cambridge Tracts in Mathematics and Mathematical Physics, No. 53.
  Cambridge University Press, New York, 1964.

\bibitem[Sch58]{SCHW58}
Laurent Schwartz.
\newblock Th\'eorie des distributions \`a valeurs vectorielles. {II}.
\newblock {\em Ann. Inst. Fourier. Grenoble}, 8:1--209, 1958.

\bibitem[Sch66]{SCHW}
Laurent Schwartz.
\newblock {\em Th\'eorie des distributions}.
\newblock Publications de l'Institut de Math\'ematique de l'Universit\'e de
  Strasbourg, No. IX--X. Nouvelle \'edition, enti\`erement corrig\'ee, refondue
  et augment\'ee. Hermann, Paris, 1966.

\bibitem[Shi63]{SHI63}
Risai Shiraishi.
\newblock On {$\theta $}-convolutions of vector valued distributions.
\newblock {\em J. Sci. Hiroshima Univ. Ser. A-I Math.}, 27:173--212, 1963.

\bibitem[vdW91]{VDW91}
B.~L. van~der Waerden.
\newblock {\em Algebra. {V}ol. {II}}.
\newblock Springer-Verlag, New York, 1991.
\newblock Based in part on lectures by E. Artin and E. Noether, Translated from
  the fifth German edition by John R. Schulenberger.

\bibitem[YO59]{YO58}
Ky{\^o}ichi Yoshinaga and Hayao Ogata.
\newblock On convolutions.
\newblock {\em J. Sci. Hiroshima Univ. Ser. A}, 22:15--24, 1958/1959.

\end{thebibliography}

\end{document}